\newtheorem{theorem}{Theorem}
\newtheorem{lemma}[theorem]{Lemma}
\newtheorem{claim}[theorem]{Claim}
\newtheorem{proposition}[theorem]{Proposition}
\newtheorem{corollary}[theorem]{Corollary}
\theoremstyle{definition}
\theoremstyle{remark}
\newcommand{\bea}{\begin{eqnarray}}
\newcommand{\eea}{\end{eqnarray}}
\newcommand{\<}{\langle}
\renewcommand{\>}{\rangle}
\def\eps{{\varepsilon}}
\def\cF{{\mathcal F}}
\def\de{{\rm d}}
\def\<{\langle}
\def\>{\rangle}
\def\cM{{\cal M}}
\def\b0{{\boldsymbol{0}}}
\def\cS{{\mathcal S}}
\renewcommand{\b}{\mathbf{b}}
\def\fr{\frac}
\def\lt{\left}
\def\rt{\right}
\def\eps{\varepsilon}
\def\bbE{{\mathbb{E}}}
\def\bbR{{\mathbb{R}}}
\def\cF{{\mathcal{F}}}
\def\cP{{\mathcal{P}}}
\DeclareMathOperator*{\EE}{\bbE}
\title{Improved Lower Bound for Frankl's Union-Closed Sets Conjecture}
\author{Ryan Alweiss\thanks{Department of Mathematics and Mathematical Statistics, University of Cambridge.} \and 
Brice Huang\thanks{Department of Electrical Engineering and Computer Science, Massachusetts Institute of Technology.} 
\and 
Mark Sellke\thanks{Department of Statistics, Harvard University.}
}
\date{}
\begin{document}

\maketitle

\begin{abstract}
    We verify an explicit inequality conjectured in \cite{gilmer2022constant}, thus proving that for any nonempty union-closed family $\cF \subseteq 2^{[n]}$, some $i\in [n]$ is contained in at least a $\fr{3-\sqrt{5}}{2} \approx 0.38$ fraction of the sets in $\cF$. One case, an explicit one-variable inequality, is checked by computer calculation.
\end{abstract}
\section{Introduction}

Let $\cM_\phi$ be the set of probability measures $\mu \in \cP([0,1])$ with expectation $\phi$. 
Define
\begin{equation}
\label{eq:F}
    F(\mu) = \EE_{(x,y)\sim \mu \times \mu} H(xy) - \EE_{x\sim \mu} H(x)
\end{equation}
where $H(x) = -x\log x - (1-x)\log (1-x)$ is the entropy function and $\log$ denotes the natural logarithm.
Note that $F$ is continuous in the weak topology and $\cM_\phi$ is compact, so $F$ has a minimizer over $\cM_\phi$.
In this note, we will show the following results.
\begin{theorem}
    \label{thm:main}
    For all $\phi\in [0,1]$, the minimum of $F(\mu)$ over $\cM_\phi$ is attained at some $\mu$ supported on at most two points.
    Furthermore, if a minimizer is supported on exactly two points, then one of the points is $0$.
\end{theorem}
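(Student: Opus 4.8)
The plan is to combine the first-order optimality condition for a minimizer with a convexity analysis of the associated ``dual'' function. We may assume $\phi\in(0,1)$, since $\cM_0=\{\delta_0\}$ and $\cM_1=\{\delta_1\}$. Fix a minimizer $\mu^*\in\cM_\phi$. Since $F$ is a quadratic functional of $\mu$ and $\cM_\phi$ is convex, differentiating $t\mapsto F\big((1-t)\mu^*+t\mu\big)$ at $t=0$ shows that for every $\mu\in\cM_\phi$ we have $\int g\,d\mu\ge\int g\,d\mu^*$, where $g(y):=2\int_0^1 H(xy)\,d\mu^*(x)-H(y)$ is continuous on $[0,1]$. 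Thus $\mu^*$ minimizes the \emph{linear} functional $\mu\mapsto\int g\,d\mu$ over $\cM_\phi$. Applying the supporting hyperplane theorem to the compact convex set $K:=\conv\{(y,g(y)):y\in[0,1]\}\subseteq\R^2$ at the point $\big(\phi,\int g\,d\mu^*\big)$ --- which, by Carath\'eodory and minimality of $\mu^*$, minimizes the second coordinate among points of $K$ with first coordinate $\phi$, the hypothesis $\phi\in(0,1)$ guaranteeing a non-vertical supporting line --- produces $\lambda_0,\lambda_1\in\R$ with
\[
h(y):=g(y)-\lambda_0-\lambda_1 y\ \ge\ 0 \ \ \text{on }[0,1],\qquad h\equiv 0\ \ \text{on }\supp\mu^*.
\]

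The crux is to show that $h$ has a severely constrained shape. On $(0,1)$ one may differentiate twice under the integral sign (legitimate on compact subintervals by elementary domination, despite $H'$ and $H''$ blowing up at $0,1$) to get
\[
h''(y)=2\int_0^1 x^2 H''(xy)\,d\mu^*(x)-H''(y)=\frac{\eta(y)}{y(1-y)},\qquad
\eta(y):=1-2(1-y)\int_0^1\frac{x}{1-xy}\,d\mu^*(x).
\]
The key observation is that $\frac{d}{dy}\!\left(\frac{(1-y)x}{1-xy}\right)=-\frac{x(1-x)}{(1-xy)^2}\le 0$ for each fixed $x\in[0,1]$, so $\eta$ is non-decreasing on $[0,1)$; it is moreover strictly increasing unless $\mu^*$ is supported in $\{0,1\}$, in which case the theorem is immediate. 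Hence $h''$ changes sign at most once on $(0,1)$, and never from positive to negative. Consequently $h$ is, on $[0,1]$, either strictly convex, strictly concave, or strictly concave on $[0,y_0]$ and strictly convex on $[y_0,1]$ for some $y_0\in(0,1)$.

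It remains to count the zeros of $h$, using that $h\ge 0$ and $h$ vanishes on $\supp\mu^*$. A strictly convex nonnegative function on an interval has at most one zero (two would force a strictly negative value between them), and a strictly concave nonnegative function has no zero in the interior of its domain (at an interior zero $c$, strict concavity gives $h(c)>\min(h(c'),h(c''))\ge 0$ for nearby $c'<c<c''$). In the strictly convex case this forces $|\supp\mu^*|\le 1$; in the strictly concave case $\supp\mu^*\subseteq\{0,1\}$; in the mixed case $h$ has no zero on $(0,y_0)$ and at most one zero on $[y_0,1]$, so $\supp\mu^*\subseteq\{0\}\cup\{a\}$ for a single $a\in[y_0,1]$. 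In every case $|\supp\mu^*|\le 2$, and if $|\supp\mu^*|=2$ then $0$ is one of the two points, as claimed. The only genuinely nonroutine step is the monotonicity of $\eta$ (equivalently, that $h''$ changes sign at most once); the extraction of the affine minorant $\lambda_0+\lambda_1 y$ and the integrability issues at the endpoints are standard, and the only bookkeeping requiring mild care is the mixed case --- for instance, verifying that $h$ cannot vanish on an entire subinterval, which holds because $\eta$ is strictly increasing so $h''\not\equiv 0$ on any subinterval.
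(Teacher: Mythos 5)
Your proof is correct, and it takes a genuinely different and in some ways cleaner route than the paper.

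The paper's argument has three stages: (i) show $F$ is concave on $\cM_\phi$ by a page of integration-by-parts manipulations and a power-series positivity observation (Lemma~\ref{lem:concave}); (ii) invoke Krein--Milman (or an explicit discretization) to reduce to minimizers supported on at most two points (Lemma~\ref{lem:at-most-two}); (iii) for a putative two-point minimizer $p\delta_x+(1-p)\delta_y$ with $0<y<x\le 1$, write the first-order condition, multiply $f''(z)$ by $z(1-z)(1-xz)(1-yz)$ to get a quadratic with leading coefficient $-xy<0$, and derive a contradiction (Lemma~\ref{lem:not-two}). You instead apply the first-order condition directly to an arbitrary minimizer $\mu^*$, extract an affine minorant of $g(y)=2\int H(xy)\,d\mu^*(x)-H(y)$ via the supporting-hyperplane theorem (so $h\ge 0$ with $h\equiv 0$ on $\supp\mu^*$), and then observe that $\eta(y)=y(1-y)h''(y)=1-2(1-y)\int\frac{x}{1-xy}\,d\mu^*(x)$ is nondecreasing because $\frac{d}{dy}\frac{(1-y)x}{1-xy}=-\frac{x(1-x)}{(1-xy)^2}\le 0$. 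This one-line monotonicity fact replaces both the concavity lemma and Krein--Milman: $h''$ changes sign at most once and only from negative to positive, so $h$ is convex, concave, or concave-then-convex, and a nonnegative function of that shape can vanish only on a set of the form $\{0\}\cup\{a\}$ (or $\{0,1\}$), which is exactly the theorem's conclusion --- and in fact for \emph{every} minimizer, not just some. Your $\eta$ and the paper's quadratic $g(z)$ encode the same object in the two-point case (the paper's $g(z)=(1-xz)(1-yz)\eta(z)$), but your observation that $\eta$ is monotone for \emph{arbitrary} $\mu^*$ is what makes the unified argument work without first discretizing. The trade-off is that the paper's decomposition produces the standalone concavity Lemma~\ref{lem:concave}, which is reused in later work (e.g.\ \cite{yu2023dimension}); your route is shorter if the only goal is Theorem~\ref{thm:main}.

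Two small points worth making explicit in a final writeup: the membership $(\phi,\int g\,d\mu^*)\in K$ requires approximating $\mu^*$ weakly by finitely supported measures in $\cM_\phi$ and using that $K$ is closed; and in the mixed case, the endpoint $y_0$ should be treated with the convexity argument on $[y_0,1]$ (where $h''>0$ on the open interval suffices to rule out two zeros, as you note via $h''\not\equiv 0$ on subintervals). Neither affects correctness.
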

The case of $\mu$ supported on $\{0,x\}$ leads to the following definition:
\[
    S = \lt\{
        \phi \in [0,1] : \phi H(x^2) \ge xH(x) ~\forall x\in [\phi,1]
    \rt\},
    \quad 
    \phi^\ast = \min(S).
\]
Note that the condition defining $S$ is monotone in $\phi$ and $S$ is clearly closed, so $\min(S)$ is well defined. As in the recent breakthrough \cite{gilmer2022constant} by Gilmer, a bound on Frankl's union-closed conjecture follows from the above. 

\begin{theorem}
    \label{thm:union-closed}
    The union-closed conjecture holds with constant $1-\phi^\ast$, i.e. for any non-empty union-closed family $\cF\subseteq 2^{[n]}$, some $i\in [n]$ is contained in at least $1-\phi^\ast$ fraction of the sets in $\cF$.
\end{theorem}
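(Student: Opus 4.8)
The plan is to carry out the entropy argument of \cite{gilmer2022constant}, using Theorem~\ref{thm:main} to reach the optimal constant. We may assume $|\cF|\ge2$, since if $|\cF|\le1$ the statement is immediate (if $\cF=\{S\}$ with $S\ne\emptyset$ then $p_i=1$ for any $i\in S$) or vacuous. Arguing by contradiction, suppose that $p_i:=\PP_{A\sim\Unif(\cF)}[i\in A]<1-\phi^\ast$ for every $i\in[n]$, i.e.\ $q_i:=1-p_i>\phi^\ast$. Draw $A,B$ independently and uniformly from $\cF$ and set $C:=A\cup B\in\cF$, so that $H(C)\le\log|\cF|=H(A)$.

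The heart of the argument is a coordinatewise estimate obtained from the entropy chain rule. Writing $\alpha_i(a):=\PP[A_i=1\mid A_{<i}=a]$, the chain rule for $A$ gives $H(A)=\sum_i\EE_{A_{<i}}\!\big[H(\alpha_i(A_{<i}))\big]$. Because $A\indep B$ we have the conditional independence $A_i\indep B_i\mid(A_{<i},B_{<i})$, hence $\PP[C_i=1\mid A_{<i},B_{<i}]=1-(1-\alpha_i(A_{<i}))(1-\alpha_i(B_{<i}))$; combining this with the data-processing inequality $H(C_i\mid C_{<i})\ge H(C_i\mid A_{<i},B_{<i})$ (valid since $C_{<i}$ is determined by $(A_{<i},B_{<i})$) and the chain rule for $C$ yields
\[
H(C)\ \ge\ \sum_i\EE_{A_{<i},\,B_{<i}}\Big[H\big(1-(1-\alpha_i(A_{<i}))(1-\alpha_i(B_{<i}))\big)\Big].
\]
Let $\nu_i\in\cP([0,1])$ be the law of $1-\alpha_i(A_{<i})$; its mean is $\EE[1-\alpha_i(A_{<i})]=1-p_i=q_i$, so $\nu_i\in\cM_{q_i}$. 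Using $H(x)=H(1-x)$ together with the independence of the prefixes $A_{<i},B_{<i}$, the two displays recombine exactly into $H(C)-H(A)\ge\sum_i F(\nu_i)$ with $F$ as in \eqref{eq:F}.

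Next I would bound each $F(\nu_i)$ from below using Theorem~\ref{thm:main}. Since $q_i>\phi^\ast=\min(S)$ and the condition defining $S$ is monotone in $\phi$, we have $q_i\in S$. By Theorem~\ref{thm:main} the minimum of $F$ over $\cM_{q_i}$ is attained at a measure of the form $(1-q_i/x)\delta_0+(q_i/x)\delta_x$ for some $x\in[q_i,1]$ (the endpoint $x=q_i$ being $\delta_{q_i}$), and a direct computation gives $F\big((1-q_i/x)\delta_0+(q_i/x)\delta_x\big)=\tfrac{q_i}{x^2}\big(q_iH(x^2)-xH(x)\big)$, which is $\ge0$ for every $x\in[q_i,1]$ precisely because $q_i\in S$. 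Hence $F(\nu_i)\ge0$ for all $i$ and therefore $H(C)\ge H(A)$. To produce a contradiction it remains to observe that $C$ is not uniform on $\cF$: picking a $\subseteq$-minimal $m\in\cF$ (take $m=\emptyset$ if $\emptyset\in\cF$), the only member of $\cF$ contained in $m$ is $m$ itself, so $\PP[C=m]=\PP[A=B=m]=|\cF|^{-2}<|\cF|^{-1}$; since $C$ is supported on $\cF$ this forces $H(C)<\log|\cF|=H(A)$, the desired contradiction. Consequently some $i\in[n]$ has $p_i\ge1-\phi^\ast$, which is Theorem~\ref{thm:union-closed}.

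The main obstacle is the entropy computation of the second step — in particular making the coordinatewise conditional-independence and data-processing manipulations precise so that the terms aggregate into exactly $\sum_i F(\nu_i)$ with the correct identification of means $\EE[\nu_i]=q_i$. The remaining ingredients, the trivial-case reduction and the minimal-element observation, are short; the latter is exactly what lets the proof go through at the boundary value $1-\phi^\ast$ without needing the $F(\nu_i)$ to be strictly positive.
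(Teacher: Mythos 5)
Your proof is correct and follows essentially the same route as the paper: the paper reduces Theorem~\ref{thm:union-closed} to Corollary~\ref{cor:gilmer-tight} (itself a consequence of Theorem~\ref{thm:main} via Corollary~\ref{cor:main-larger-than-phi*}) and then cites Gilmer's entropy argument, whereas you have simply unrolled that chain — the chain-rule/conditional-independence/data-processing computation giving $H(C)-H(A)\ge\sum_i F(\nu_i)$, the nonnegativity of each $F(\nu_i)$ from Theorem~\ref{thm:main} together with $q_i\ge\phi^\ast$ and the definition of $S$, and the minimal-element observation forcing $H(C)<H(A)$ — so the content is identical, just presented self-containedly rather than by reference.
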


Throughout this paper we set $\varphi = \fr{\sqrt5-1}{2}$.
In the Appendix, we give a numerical verification of the following claim.
We require certain computer calculations (detailed in an attached Python file) to be accurate to within margin of error $10^{-3}$, which can be made completely rigorous using interval arithmetic.
\begin{claim}
    \label{claim:ineq}
    If $x \in [\varphi,1]$, then $\varphi H(x^2) \ge xH(x)$, with equality if and only if $x\in \{\varphi,1\}$.
\end{claim}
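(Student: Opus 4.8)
The plan is to study $g(x) := \varphi H(x^2) - x H(x)$ on $[\varphi,1]$ and show that $g>0$ on the open interval while $g(\varphi) = g(1) = 0$; the equality characterization is then immediate. The golden-ratio identities $\varphi^2 = 1-\varphi$ and $1/\varphi = 1+\varphi$ (hence also $\varphi^3 = 2\varphi-1$ and $\varphi^4 = 2-3\varphi$) make the boundary data transparent. Since $H(1-t) = H(t)$ and $H(1)=0$, one gets $g(\varphi) = \varphi H(\varphi^2) - \varphi H(\varphi) = \varphi H(1-\varphi) - \varphi H(\varphi) = 0$ and $g(1) = 0$; and using $H'(t) = \log\frac{1-t}{t}$ a short computation gives $g'(\varphi) = 0$ as well, so $\varphi$ is a double zero of $g$ and $1$ a simple one. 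The claim is exactly that these are the only zeros and $g \ge 0$.

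The engine of the argument is that the derivatives of $H$ become rational, so differentiating $g$ three times eliminates every logarithm. Organizing the second derivative via $H''(t) = -\frac{1}{t(1-t)}$ and $\log\frac{1-x^2}{x^2} = \log\frac{1-x}{x} + \log\frac{1+x}{x}$, and then differentiating once more, I would obtain
\[
  g'''(x) = \frac{N(x)}{x(1-x)^2(1+x)^2}, \qquad N(x) = -x^3 - 4\varphi x^2 + 3x + (2-4\varphi).
\]
The denominator is positive on $(0,1)$, so $g'''$ has the sign of the cubic $N$ there. Now $N''(x) = -6x - 8\varphi < 0$, so $N'$ is decreasing, and $N'(\varphi) = -11\varphi^2 + 3 = 11\varphi - 8 < 0$ (equivalently $11\sqrt5 < 27$); hence $N' < 0$ on all of $[\varphi,1]$ and $N$ is strictly decreasing there. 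Since $N(\varphi) = 7 - 11\varphi > 0$ (i.e. $11\sqrt5 < 25$) and $N(1) = 4 - 8\varphi < 0$ (i.e. $\varphi > \tfrac12$), the cubic $N$ — hence $g'''$ — is positive on $(\varphi, x_0)$ and negative on $(x_0, 1)$ for a single point $x_0$.

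From here I would integrate back up in three steps. Because $g'''$ is $+$ then $-$ on $(\varphi,1)$, $g''$ is unimodal (up then down); if $g''$ stayed nonnegative throughout then $g'$ would be increasing from $g'(\varphi) = 0$, forcing $g$ strictly increasing and $g(1) > 0$, a contradiction — so $g''$ is positive on some $(\varphi, c_1)$ and negative on $(c_1, 1)$. Applying the same reasoning to $g'$ (increasing on $(\varphi, c_1]$ from $0$, then decreasing; if it stayed nonnegative, again $g(1) > 0$) gives $g' > 0$ on some $(\varphi, c_2)$ and $g' < 0$ on $(c_2, 1)$. Hence $g$ increases then decreases on $(\varphi,1)$, and since $g(\varphi) = g(1) = 0$ it is strictly positive in between, vanishing only at the endpoints. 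Note that this cascade uses only $g(\varphi) = g(1) = 0$, $g'(\varphi) = 0$, and $g''(\varphi) > 0$; no blow-up estimates at $x=1$ are required.

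The one step I expect to actually require thought is establishing $g''(\varphi) > 0$, which seeds the cascade. A direct computation gives $g''(\varphi) = -2(1+\varphi)\log\varphi - (2-\varphi)$, so (using $\log(1/\varphi) = -\log\varphi$ and $1/\varphi = 1+\varphi$) the needed inequality is $\log(1+\varphi) > \frac{2-\varphi}{2(1+\varphi)}$. I would deduce this from the elementary Padé-type bound $\log(1+t) \ge \frac{2t}{2+t}$ for $t \ge 0$, whose derivative $\frac{t^2}{(1+t)(2+t)^2} \ge 0$ makes the check immediate: it then suffices that $\frac{2\varphi}{2+\varphi} \ge \frac{2-\varphi}{2(1+\varphi)}$, which after clearing denominators and substituting $\varphi^2 = 1-\varphi$ collapses to $5 - \varphi \ge 4$. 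Everything else is elementary algebra with the golden ratio, together with the routine observation that $g$ and its derivatives are analytic on $(0,1)$ and $g$ extends continuously to $x = 1$ with value $0$.
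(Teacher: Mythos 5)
Your proposal is correct and takes a genuinely different, and in a sense stronger, route than the paper's. The paper verifies $G(x)=\varphi H(x^2)-xH(x)\ge 0$ by splitting $[\varphi,1]$ into three subintervals and relying on computer evaluations (Tables~\ref{tab:Lbash} and \ref{tab:Gbash}) on two of them, with only the analysis near $x=1$ fully by hand. You instead give a purely analytic, computer-free proof. I checked the key computation: the third derivative indeed simplifies to $g'''(x)=N(x)/\bigl(x(1-x)^2(1+x)^2\bigr)$ with $N(x)=-x^3-4\varphi x^2+3x+(2-4\varphi)$, and your sign analysis of the cubic $N$ is right ($N''<0$, $N'(\varphi)=11\varphi-8<0$, $N(\varphi)=7-11\varphi>0$, $N(1)=4-8\varphi<0$), so $g'''$ is positive then negative on $(\varphi,1)$. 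The descent argument — $g''$ unimodal, must turn negative since otherwise $g(1)>0$, hence $g'$ unimodal from $g'(\varphi)=0$, must turn negative for the same reason, hence $g$ rises from $0$ and falls back to $0$ — is clean and uses exactly the boundary data $g(\varphi)=g(1)=0$, $g'(\varphi)=0$, $g''(\varphi)>0$. Your verification of $g''(\varphi)=-2(1+\varphi)\log\varphi-(2-\varphi)>0$ via $\log(1+t)\ge\frac{2t}{2+t}$ and the golden-ratio identity $\varphi^2=1-\varphi$ (which collapses the needed inequality to $5-\varphi\ge 4$) is correct and elementary. The upshot is that your argument eliminates the appeal to interval arithmetic that the paper flags as necessary for rigor; the paper's approach, by contrast, avoids differentiating three times and may be easier to discover numerically, but yours is the one a referee would prefer to see. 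One small point worth making explicit in a writeup: $g$ and its derivatives blow up or fail to extend at $x=1$, so the cascade should be run on $[\varphi,1)$ with the conclusion $g>0$ pushed to the closed interval via continuity of $g$ at $1$; your argument already does this implicitly but it deserves a sentence.
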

Assuming Claim~\ref{claim:ineq}, the following claim identifies the value of $\phi^\ast$.
Then, Theorem~\ref{thm:union-closed} implies that the union-closed conjecture holds with constant $1-\varphi = \fr{3-\sqrt5}{2}$. This is a natural barrier for the method of \cite{gilmer2022constant} as explained therein. Interestingly, Claim~\ref{claim:ineq} has been mentioned previously in a different context by \cite{boppana1985amplification}.
\begin{claim}
    \label{claim:numeric}
    We have that $\phi^\ast = \varphi$. 
\end{claim}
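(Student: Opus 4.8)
The plan is to prove the two inequalities $\phi^\ast \le \varphi$ and $\phi^\ast \ge \varphi$ separately, both of them exploiting the algebraic identity $\varphi^2 = 1-\varphi$ (equivalently $\varphi^2+\varphi-1 = 0$, which holds for $\varphi = \fr{\sqrt5-1}{2}$) together with the reflection symmetry $H(t) = H(1-t)$ of the binary entropy.

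For the bound $\phi^\ast \le \varphi$, I would simply observe that Claim~\ref{claim:ineq} asserts exactly that $\varphi H(x^2) \ge x H(x)$ for every $x \in [\varphi,1]$, which is precisely the condition defining membership in $S$ with $\phi = \varphi$. Hence $\varphi \in S$, and since $\phi^\ast = \min(S)$ this yields $\phi^\ast \le \varphi$ (and re-confirms $S \neq \emptyset$).

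For the reverse bound $\phi^\ast \ge \varphi$, I would show that no $\phi < \varphi$ belongs to $S$. Fix such a $\phi$. Since $\phi < \varphi < 1$, the point $x = \varphi$ lies in the interval $[\phi,1]$ over which the defining inequality would have to hold. Evaluating the condition at $x = \varphi$ and using $\varphi^2 = 1-\varphi$ together with $H(\varphi^2) = H(1-\varphi) = H(\varphi)$, the requirement becomes $\phi\, H(\varphi) \ge \varphi\, H(\varphi)$. Because $\varphi \in (0,1)$ we have $H(\varphi) > 0$ strictly, so this forces $\phi \ge \varphi$, contradicting $\phi < \varphi$. Therefore $S \cap [0,\varphi) = \emptyset$, i.e.\ $\phi^\ast \ge \varphi$. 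Combining the two bounds gives $\phi^\ast = \varphi$.

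I do not expect a serious obstacle here once Claim~\ref{claim:ineq} is granted: the argument amounts to plugging the distinguished point $x = \varphi$ into the definition of $S$ and invoking the golden-ratio identity and the symmetry of $H$. The only points that need care are verifying $\varphi^2 = 1-\varphi$ for $\varphi = \fr{\sqrt5-1}{2}$, checking that $x = \varphi$ is admissible (lies in $[\phi,1]$) for every candidate $\phi < \varphi$, and recording that $H(\varphi) > 0$ strictly so that the inequality can be divided through.
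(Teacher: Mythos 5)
Your proposal is correct and follows essentially the same route as the paper: the bound $\phi^\ast \le \varphi$ comes directly from Claim~\ref{claim:ineq} showing $\varphi \in S$, and the bound $\phi^\ast \ge \varphi$ is the paper's Lemma~\ref{lem:phi*-ge-varphi}, proved identically by evaluating the defining inequality of $S$ at $x = \varphi$ and using $H(\varphi^2) = H(1-\varphi) = H(\varphi)$. The only cosmetic difference is that the paper factors the lower bound into a separate lemma rather than proving it inline.
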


\paragraph{Related Work.} 
The union-closed conjecture has been the subject of much study, see \cite{polymath,knill1994graph,wojcik1999union,balla2013union,karpas2017two} or the survey \cite{bruhn2015journey}.
The recent breakthrough \cite{gilmer2022constant} by Gilmer showed that this conjecture holds with constant $0.01$.

Concurrently with and independently of this work, Chase and Lovett \cite{chase2022approximate}, Sawin \cite{sawin2022improved}, and Pebody \cite{pebody2022extension} also proved the union-closed conjecture with constant $\fr{3-\sqrt5}{2}$.
\cite{sawin2022improved} also outlined an argument to improve this bound by an additional small constant, which was subsequently made explicit in \cite{yu2023dimension} (using Lemma~\ref{lem:concave} below) and \cite{cambie2022better}.
Moreover, \cite{sawin2022improved} and Ellis \cite{ellis2022note} found counterexamples to \cite[Conjecture 1]{gilmer2022constant}, which would have implied the full union-closed conjecture with constant $\fr12$.

\paragraph{Acknowledgements.}
We thank Zachary Chase and Shachar Lovett for sharing their writeup \cite{chase2022approximate} with us. We thank Mehtaab Sawhney and the anonymous referee for helpful comments. RA was supported by an NSF Mathematical Sciences Postdoctoral Research Fellowship. BH was supported by an NSF Graduate Research Fellowship, a Siebel scholarship, NSF awards CCF-1940205 and DMS-1940092, and NSF-Simons collaboration grant DMS-2031883.

\section{Reduction to Two Point Masses}

\begin{lemma}
    \label{lem:concave}
    $F$ is concave on $\cM_\phi$ for any $\phi\in [0,1]$, i.e.
    \begin{equation}
    \label{eq:pF}
        pF(\mu_1)+(1-p) F(\mu_2)
        \leq
        F(p\mu_1+(1-p)\mu_2)\quad\forall~\mu_1,\mu_2\in\cM_{\phi},~p\in [0,1]\,.
    \end{equation}
\end{lemma}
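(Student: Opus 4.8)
The plan is to exploit the special algebraic form of $F$. Write $Q(\mu,\tilde\mu) = \iint_{[0,1]^2} H(xy)\,\mathrm{d}\mu(x)\,\mathrm{d}\tilde\mu(y)$ for the associated symmetric bilinear form on finite signed measures and $L(\mu) = \int_{[0,1]} H(x)\,\mathrm{d}\mu(x)$ for the linear part, so that $F(\mu) = Q(\mu,\mu) - L(\mu)$; all integrals are finite and the integrands are continuous on $[0,1]$ once $t\mapsto t\log t$ is extended by $0$ at $t=0$. Fix $\mu_1,\mu_2\in\cM_\phi$ and $p\in[0,1]$, set $\nu := \mu_1-\mu_2$, and note $\mu_p := p\mu_1+(1-p)\mu_2 = \mu_2 + p\nu$ lies in $\cM_\phi$. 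Expanding using bilinearity and linearity,
\[
    F(\mu_p) - \big[\,pF(\mu_1)+(1-p)F(\mu_2)\,\big] \;=\; -p(1-p)\,Q(\nu,\nu),
\]
so \eqref{eq:pF} is equivalent to $Q(\nu,\nu)\le 0$. It therefore suffices to show $Q(\nu,\nu)\le 0$ for every finite signed measure $\nu$ on $[0,1]$ of the form $\mu_1-\mu_2$ with $\mu_1,\mu_2\in\cM_\phi$; any such $\nu$ has total mass $\nu([0,1])=0$ and first moment $\int x\,\mathrm{d}\nu(x)=\phi-\phi=0$, and it is only the latter that I will use.

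For the main step, decompose $H(xy) = -xy\log(xy) - (1-xy)\log(1-xy)$ and integrate the two pieces against $\nu\times\nu$ separately. The first piece factorizes, $-xy\log(xy) = -(x\log x)\,y - x\,(y\log y)$, so
\[
    \iint -xy\log(xy)\,\mathrm{d}\nu(x)\,\mathrm{d}\nu(y) \;=\; -2\Big(\int x\log x\,\mathrm{d}\nu(x)\Big)\Big(\int y\,\mathrm{d}\nu(y)\Big) \;=\; 0,
\]
since $\int y\,\mathrm{d}\nu(y)=0$. For the second piece, the key input is the Taylor expansion
\[
    -(1-t)\log(1-t) \;=\; t - \sum_{k\ge 2}\frac{t^{k}}{k(k-1)}, \qquad t\in[0,1],
\]
whose tail is dominated by $\sum_{k\ge2}\frac1{k(k-1)}=1$, so the series converges uniformly on $[0,1]$. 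Substituting $t=xy$ and integrating termwise against $\nu\times\nu$ gives
\[
    \iint -(1-xy)\log(1-xy)\,\mathrm{d}\nu(x)\,\mathrm{d}\nu(y)
    \;=\; \Big(\int x\,\mathrm{d}\nu(x)\Big)^{2} - \sum_{k\ge 2}\frac{1}{k(k-1)}\Big(\int x^{k}\,\mathrm{d}\nu(x)\Big)^{2} \;\le\; 0 ,
\]
because the $k=1$ term vanishes by the moment constraint and every remaining summand is nonpositive. Adding the two contributions yields $Q(\nu,\nu)\le 0$, which finishes the argument.

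The only step needing care is the termwise integration. Since $(xy)^k\le 1$ on $[0,1]^2$ we have $\iint (xy)^{k}\,\mathrm{d}|\nu|(x)\,\mathrm{d}|\nu|(y)\le \|\nu\|_{\mathrm{TV}}^{2}$, hence $\sum_{k\ge 2}\frac1{k(k-1)}\iint (xy)^{k}\,\mathrm{d}|\nu|\,\mathrm{d}|\nu| \le \|\nu\|_{\mathrm{TV}}^{2}<\infty$, so Fubini's theorem licenses exchanging $\sum_k$ with $\iint$; the partial sums converge (uniformly on $[0,1]^2$) to $xy+(1-xy)\log(1-xy)$, which produces the displayed identity. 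Everything else is routine. The conceptual heart of the proof --- and the only place where any insight is needed --- is precisely this decomposition: the \emph{dangerous} term $-xy\log(xy)$ is annihilated by any mean-zero signed measure, while the remaining kernel $-(1-xy)\log(1-xy)$, expanded as a power series in the product $xy$, has nonpositive coefficients in every degree $\ge 2$, so it is negative semidefinite exactly on the subspace of measures that also kill the degree-one part. I anticipate no serious obstacle beyond seeing at the outset that one should pass to $\nu=\mu_1-\mu_2$ and that the common expectation $\phi$ --- without which $F$ is genuinely not concave --- is what makes the quadratic form negative semidefinite.
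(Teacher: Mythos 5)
Your proof is correct, and it takes a genuinely different and arguably cleaner route than the paper. The paper passes to the cumulative distribution function $\gamma(x)=\mu([0,x])$, performs several rounds of integration by parts to rewrite $F$ as a functional of $\gamma$, and then expands the resulting kernel $\frac{1}{1-xy}+\log\frac{1}{1-xy}$ as a power series with nonnegative coefficients, so that the quadratic part of $F(\gamma)$ is a nonnegative sum of squares of linear functionals of $\gamma$, hence convex, hence $F$ is concave. You instead work directly with the measure: you write $F=Q-L$ with $Q$ the bilinear form coming from $H(xy)$, reduce concavity to $Q(\nu,\nu)\le 0$ for $\nu=\mu_1-\mu_2$ (a standard polarization identity), split $H(xy)=-xy\log(xy)-(1-xy)\log(1-xy)$, observe that the first piece separates as $-(x\log x)y - x(y\log y)$ and so its contribution to $Q(\nu,\nu)$ vanishes by the first-moment constraint, and expand $-(1-t)\log(1-t)=t-\sum_{k\ge 2}\frac{t^{k}}{k(k-1)}$ so that the remaining kernel contributes a nonpositive sum of squares (after the $k=1$ term also drops by the moment constraint). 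Both arguments ultimately hinge on a power series whose terms integrate to squares of linear functionals, but yours avoids the CDF change of variables and the integration-by-parts bookkeeping entirely, and it makes transparent that the only structure of $\cM_\phi$ used is the fixed first moment. Your justification of the termwise integration via dominated convergence / Fubini with $\sum_{k\ge 2}\frac{1}{k(k-1)}=1$ is adequate. One cosmetic caution: the phrase that the term $-xy\log(xy)$ is \emph{annihilated} by mean-zero $\nu$ might be read as saying the integrand itself vanishes; what you actually show (and what suffices) is that the associated quadratic form $\nu\mapsto\iint -xy\log(xy)\,\mathrm{d}\nu\,\mathrm{d}\nu$ vanishes on mean-zero $\nu$.
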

\begin{proof}
    Let $\gamma(x) = \mu([0,x])$ be the cumulative distribution function of $\mu$. Thus $\gamma(1)=1$ and
    \[
    	\phi = \int_0^1 x\mu(\de x) = 1-\int_0^1 \gamma(x)~\de x\,,
    \]
    so 
    \begin{equation}
        \label{eq:phi}
        \int_0^1 \gamma(x)~\de x = 1-\phi\,.
    \end{equation}
    Using integration by parts,
    \[
    	\int_0^1 H(x)\mu(\de x)
    	=
    	H(x)\gamma(x)\bigg|_0^1 - \int_0^1 H'(x)\gamma(x)~\de x
    	=
    	\int_0^1 \lt(\log \fr{x}{1-x}\rt) \gamma(x)~\de x\,.
    \]
    Similarly,
    \begin{align*}
    	\int_0^1 H(xy) \mu(\de y)
    	&=
    	H(xy) \gamma(y) \bigg|_0^1 - \int_0^1 xH'(xy) \gamma(y)~\de y \\
    	&= H(x) + \int_0^1 \lt(x \log \fr{xy}{1-xy}\rt) \gamma(y)~\de y\,; \\
    	\int_0^1 \lt(x \log \fr{xy}{1-xy}\rt)\mu(\de x)
    	&=
    	\lt(x \log \fr{xy}{1-xy}\rt) \gamma(x) \bigg|_0^1 - \int_0^1 \fr{\de}{\de x} \lt(x \log \fr{xy}{1-xy}\rt) \gamma(x)~\de x\,, \\
    	&= \log \fr{y}{1-y} - \int_0^1 \lt(\fr{1}{1-xy} + \log \fr{xy}{1-xy}\rt) \gamma(x)~\de x\,; \\
    	\iint_{[0,1]^2} H(xy) \mu(\de x)\mu(\de y)
    	&=
    	\int_0^1 H(x) \mu(\de x) + \int_0^1 \gamma(y) \int_0^1 x \log \fr{xy}{1-xy}\mu(\de x)~\de y\,, \\
    	&=
    	2\int_0^1 \lt(\log \fr{x}{1-x}\rt) \gamma(x)~\de x - \iint\limits_{[0,1]^2} \lt(\fr{1}{1-xy} + \log \fr{xy}{1-xy}\rt) \gamma(x)\gamma(y) \de x\de y.
    \end{align*}
    So, letting $F(\gamma) = F(\mu)$ by slight abuse of notation, we have
    \[
    	F(\gamma)
    	=
    	\int_0^1 \lt(\log \fr{x}{1-x}\rt) \gamma(x)~\de x
    	- \iint_{[0,1]^2} \lt(\log x + \log y + \fr{1}{1-xy} + \log \fr{1}{1-xy}\rt) \gamma(x)\gamma(y) ~\de x~\de y\,.
    \]
    We will show this is concave in $\gamma$.
    The first integral is manifestly linear in $\gamma$, and the contributions of $\log x$ and $\log y$ are linear because, in light of \eqref{eq:phi}, 
    \[
        \iint_{[0,1]^2} (\log x) \gamma(x)\gamma(y) ~\de x~\de y
        = 
        (1-\phi) \int_0^1 (\log x) \gamma(x) ~\de x\,.
    \]
     After removing these terms, we are reduced to showing convexity of
    \[
        \iint_{[0,1]^2}
        \lt(\frac{1}{1-xy}
        +
        \log\frac{1}{1-xy}\rt)
        \gamma(x) \gamma(y)
        \de x\de y\,.
    \]
    Note that both $\frac{1}{1-xy}$ and $\log\frac{1}{1-xy}$ are of the form $\sum_{k\geq 0} a_k x^k y^k$ for constants $a_k\geq 0$. Hence it suffices to prove convexity of
    \[
        \iint_{[0,1]^2} x^k y^k \gamma(x) \gamma(y) \de x\de y
        =
        \lt(\int_0^1 x^k \gamma(x)\de x\rt)^2
    \]
    for any $k\geq 0$. This is the square of a linear function of $\gamma$, and hence is convex. (Note that all integrands are in $L^1$ and so there are no convergence issues.)
\end{proof}

\begin{lemma}
    \label{lem:at-most-two}
    $\arg\min_{\mu\in\cM_{\phi}}F(\mu)$ contains some $\mu$ supported on at most two points.
\end{lemma}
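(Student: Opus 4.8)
I would deduce the lemma from the concavity of $F$ (Lemma~\ref{lem:concave}) together with two standard facts: a concave, upper semicontinuous function on a compact convex set attains its minimum at an extreme point; and the extreme points of $\cM_\phi$ are precisely the measures supported on at most two points. The first fact reduces the search for a minimizer to the extreme points of $\cM_\phi$, and the second identifies those extreme points in the form we want.

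\textbf{Step 1: reduce to extreme points.} The set $\cM_\phi$ is convex and weakly compact, being the intersection of the weakly compact set $\cP([0,1])$ with the closed affine hyperplane $\{\mu : \EE_{x\sim\mu}[x]=\phi\}$, and it sits inside the locally convex space $C([0,1])^\ast$. Since $F$ is continuous on $\cM_\phi$ (hence upper semicontinuous) and concave there by Lemma~\ref{lem:concave}, Bauer's minimum principle applies and $F$ attains its minimum over $\cM_\phi$ at some extreme point $\mu^\ast$ of $\cM_\phi$. (Alternatively, one can use the Choquet representation of $\mu^\ast$ as a barycenter of a probability measure on extreme points and apply Jensen's inequality to the concave $F$; this again forces some extreme point to be a minimizer.) So it suffices to show every extreme point of $\cM_\phi$ is supported on at most two points.

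\textbf{Step 2: extreme points have at most two atoms.} Suppose $\mu\in\cM_\phi$ has support of size at least three, and pick $u_1<u_2<u_3$ in $\supp(\mu)$. Choose thresholds $t_1\in(u_1,u_2)$, $t_2\in(u_2,u_3)$, and set $I_1=[0,t_1]$, $I_2=(t_1,t_2]$, $I_3=(t_2,1]$. Each $I_j$ contains an open neighborhood of $u_j$, so $p_j:=\mu(I_j)>0$. Put $\mu_j:=\mu(\,\cdot\cap I_j)/p_j$ and $\phi_j:=\EE_{x\sim\mu_j}[x]$, so that $\mu=\sum_j p_j\mu_j$, $\sum_j p_j=1$, $\sum_j p_j\phi_j=\phi$, and $\phi_1\le t_1<\phi_2\le t_2<\phi_3$; in particular $\phi_1<\phi_2<\phi_3$. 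The feasible set $\{(q_1,q_2,q_3):q_j\ge 0,\ \sum_j q_j=1,\ \sum_j q_j\phi_j=\phi\}$ is a nondegenerate line segment in $\R^3$ (the two affine constraints are independent since the $\phi_j$ are not all equal), and $(p_1,p_2,p_3)$ lies in its relative interior because all $p_j>0$. Hence $(p_1,p_2,p_3)=\tfrac12(a_1,a_2,a_3)+\tfrac12(b_1,b_2,b_3)$ for distinct feasible triples $(a_j)\neq(b_j)$. The measures $\mu^{(a)}:=\sum_j a_j\mu_j$ and $\mu^{(b)}:=\sum_j b_j\mu_j$ then lie in $\cM_\phi$, are distinct (the $\mu_j$ have pairwise disjoint supports, so $\mu^{(a)}(I_j)=a_j$ recovers the coefficients), and satisfy $\mu=\tfrac12\mu^{(a)}+\tfrac12\mu^{(b)}$. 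Thus $\mu$ is not extreme, which completes the proof.

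\textbf{Main obstacle.} The soft part (Step 1) is immediate once concavity is in hand; the work is in Step 2, and the one point needing care is extracting, from any support of size at least three, three interval pieces of strictly positive mass with strictly ordered conditional means, uniformly over measures with atoms and continuous parts, and checking that the perturbed measures stay in $\cM_\phi$. The ordering $\phi_1<\phi_2<\phi_3$ (which makes the feasible segment nondegenerate) is exactly what this careful choice of thresholds buys. If one wanted to avoid invoking Bauer's principle, the same three-piece splitting can be combined directly with the concavity inequality~\eqref{eq:pF}, at the cost of a somewhat longer argument; the route above is the one I would take.
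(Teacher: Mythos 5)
Your proof is correct and follows essentially the same route as the paper's primary argument: combine the concavity of $F$ (Lemma~\ref{lem:concave}) with the fact that extreme points of $\cM_\phi$ are supported on at most two points. The paper states this first proof briefly, citing Krein--Milman and \cite{winkler1988extreme} for the extreme-point characterization, and then also gives an alternative self-contained perturbation argument on finitely supported measures; your version instead proves the extreme-point characterization directly (Step~2) and correctly names Bauer's minimum principle (or Choquet) as the precise tool the first proof needs beyond Krein--Milman alone.
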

\begin{proof}
This follows immediately from Lemma~\ref{lem:concave} and the Krein-Milman theorem since $\cM_{\phi}$ is compact in the weak-* topology and convex, and all extreme measures in $\cM_{\phi}$ are supported on $1$ or $2$ points (see e.g. \cite{winkler1988extreme} for more on the latter point).  

For self-containedness, we also include an explicit and elementary version of this argument. First let $\mu\in\cM_{\phi}$ be any minimizer of $F$ and note that $\mu$ can be approximated arbitrarily well in the weak topology by $\hat\mu$ with finite support. In particular for any $\eps>0$, there exists $\hat\mu\in\cM_{\phi}$ with $F(\hat\mu)\leq F(\mu)+\eps$ of the form
\[
    \hat\mu(a_i)=b_i-b_{i-1},\quad 1\leq i\leq k
\]
for constants
$0\le a_1 < \cdots < a_k\leq 1$ and $0=b_0<b_1<\cdots<b_k=1$. We claim that for any $\eps>0$, the minimal $k$ such that such a $\hat\mu$ exists is at most two. Indeed given such a $\hat\mu$ with $k\geq 3$, we may consider $\hat\mu_{\eta}$ defined by 
\begin{align*}
    \hat\mu_{\eta}(a_1)&=b_1-b_0+\eta(a_3-a_2),
    \\
    \hat\mu_{\eta}(a_2)&=b_2-b_1-\eta(a_3-a_1),
    \\
    \hat\mu_{\eta}(a_3)&=b_3-b_2+\eta(a_2-a_1),
    \\
    \hat\mu_{\eta}(a_i)&=\hat\mu(a_i)=b_i-b_{i-1},\quad\forall \, i\in \{4,5,\dots,k\}.
\end{align*}
Then there exist $c_1,c_2>0$ such that $\hat\mu_{\eta}\in\cM_{\phi}$ if and only if $-c_1\leq \eta\leq c_2$; moreover the map $\eta\mapsto F(\hat\mu_{\eta})$ is concave by Lemma~\ref{lem:concave}. It is easy to see that both $\hat\mu_{-c_1},\hat\mu_{c_2}$ have support size at most $k-1$, and at least one of $F(\hat\mu_{-c_1}),F(\hat\mu_{c_2})$ is at most $F(\hat\mu)$ by concavity. Iterating this argument, we find a $\tilde\mu\in\cM_{\phi}$ with support size at most $2$ and with $F(\tilde\mu)\leq F(\hat\mu)\leq F(\mu)+\eps$. Taking a subsequential weak limit of the resulting $\tilde\mu$ as $\eps\to 0$ completes the proof.
\end{proof}

\section{Optimization over Two Point Masses}

\begin{lemma}
    \label{lem:not-two}
    If $\mu$ is supported on exactly two points, neither of which is $0$, then $\mu$ is not a minimizer of $F$ over $\cM_\phi$. 
\end{lemma}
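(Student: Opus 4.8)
The plan is to exploit that $F$ is a \emph{quadratic} functional of $\mu$ — a fact essentially contained in the proof of Lemma~\ref{lem:concave}. Expanding $\tfrac1{1-xy}+\log\tfrac1{1-xy}=\sum_{k\ge0}a_kx^ky^k$ there gives, on $\cM_\phi$,
\[
F(\mu)=\Lambda(\mu)-\sum_{j\ge2}\frac{M_j(\mu)^2}{j(j-1)},\qquad M_j(\mu):=\int_0^1 z^j\,\de\mu(z),
\]
with $\Lambda$ affine. Hence for any finite signed measure $\rho$ on $[0,1]$ with $\int\de\rho=\int z\,\de\rho=0$ one has the exact identity
\[
F(\mu+s\rho)=F(\mu)+s\,A_\mu(\rho)-s^2B(\rho),\quad A_\mu(\rho)=\int_0^1\psi_\mu\,\de\rho,\quad \psi_\mu(z):=2\,\EE_{Y\sim\mu}H(zY)-H(z),
\]
where $B(\rho)=\sum_{j\ge2}\tfrac1{j(j-1)}\big(\int z^j\de\rho\big)^2\ge0$ and $B(\rho)>0$ unless $\rho=0$ (a signed measure on $[0,1]$ with all moments $0$ vanishes). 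Consequently, if $\mu$ minimizes $F$ over $\cM_\phi$ then $A_\mu(\rho)>0$ for every nonzero $\rho$ with $\mu+s\rho\in\cM_\phi$ for all small $s>0$: otherwise $F(\mu+s\rho)\le F(\mu)-s^2B(\rho)<F(\mu)$.

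Write $\mu=p\delta_a+(1-p)\delta_b$ with $0<a<b\le1$, $pa+(1-p)b=\phi$; the admissible directions $\rho$ are exactly those removing mass from $\{a,b\}$ and redistributing it on $[0,1]$ keeping total mass and mean fixed. I would produce one with $A_\mu(\rho)\le0$. Removing $t$ mass from $a$ and placing $t\sigma$ for a probability measure $\sigma$ of mean $a$ gives $A_\mu(\rho)/t=\int\psi_\mu\,\de\sigma-\psi_\mu(a)$, whose infimum over such $\sigma$ is $\check\psi_\mu(a)-\psi_\mu(a)$, $\check\psi_\mu$ being the convex lower envelope of $\psi_\mu$ on $[0,1]$; so if $\psi_\mu(a)>\check\psi_\mu(a)$, or the same at $b$, we are done. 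Likewise, moving mass off \emph{both} atoms onto an intermediate point forces, for a minimizer, $\psi_\mu$ to lie strictly above on $(a,b)$ the chord through $(a,\psi_\mu(a))$ and $(b,\psi_\mu(b))$. So one is reduced to the case that $\psi_\mu$ meets its convex envelope at $a$ and at $b$ while being ``above-chord'' on $[a,b]$.

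Ruling this out is the main obstacle, and it must use the explicit formula $\psi_\mu''(z)=\tfrac1z\big(\tfrac1{1-z}-\tfrac{2pa}{1-az}-\tfrac{2(1-p)b}{1-bz}\big)$. Since $\psi_\mu''(z)\to+\infty$ as $z\to1^-$, $\psi_\mu$ is convex near $1$; when $b<1$, splitting the atom at $b$ is admissible and forces $\psi_\mu''(b)\ge0$, which together with the above-chord property on $[a,b]$ should pin $(a,b,p)$ to a degenerate locus to be eliminated by a higher-order expansion, while $b=1$ (where the atom at $1$ cannot be split) needs a separate hands-on argument. I expect this exhaustive incompatibility analysis to be the hard part.

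An alternative worth trying first is to show outright that $F(\delta_\phi)<F(\mu)$. Using $pf(a)+(1-p)f(b)-f(\phi)=\int_a^bf''\,\Psi_\mu$ with the ``tent'' $\Psi_\mu$, and its three-atom analogue $\Psi_{XY}$ for the law of $XY$, one gets $F(\mu)-F(\delta_\phi)=-\int_0^1\frac{\Psi_{XY}(s)-\Psi_\mu(s)}{s(1-s)}\,\de s$, which must be shown to be positive (here $\Psi_{XY}\le\Psi_\mu$ fails pointwise, e.g.\ on $(a^2,a)$, so the weight $1/(s(1-s))$ is essential); the second-order expansion $\tfrac{\de^2}{\de t^2}F\big(\mathrm{law}(tX+(1-t)\phi)\big)\big|_{t=0}=\Var_\mu(X)/(\phi(1+\phi))>0$ already shows $\delta_\phi$ is a strict local minimum among laws with no atom at $0$, so the crux there is the global step.
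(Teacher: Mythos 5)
Your first route reproduces the paper's argument up to the decisive step, but you explicitly stop short of closing it. You correctly derive the first-order optimality condition (that $\psi_\mu$ — the paper's $f$ — must lie above the chord through $(a,\psi_\mu(a))$ and $(b,\psi_\mu(b))$ on $[a,b]$), and you write down the correct second derivative
\[
\psi_\mu''(z)=\frac{1}{z(1-z)}-\frac{2pa}{z(1-az)}-\frac{2(1-p)b}{z(1-bz)}\,.
\]
At that point you say the incompatibility ``should pin $(a,b,p)$ to a degenerate locus to be eliminated by a higher-order expansion'' and that you ``expect this exhaustive incompatibility analysis to be the hard part.'' That is the gap: there is no degenerate locus and no higher-order expansion needed. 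The missing observation is purely algebraic: $g(z):=z(1-z)(1-az)(1-bz)\,\psi_\mu''(z)$ clears all denominators and is a quadratic polynomial in $z$ with leading coefficient $ab-2pab-2(1-p)ab=-ab<0$. The above-chord property forces $\psi_\mu''$ (hence $g$) to be $\ge 0$ at $a$ and $b$ but $\le 0$ at some interior $z_0\in(a,b)$; a concave quadratic taking nonnegative values at two points and a nonpositive value in between must vanish at three distinct points, hence identically, contradicting the nonzero leading coefficient $-ab$. The case $b=1$, which you flag as needing a ``separate hands-on argument,'' is handled by noting $g(1)=0$ (so you get $g(1)\ge 0$ for free, with no need for $\psi_\mu''(1)$ to exist), after which the same quadratic contradiction applies on $(a,1)$.

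Two smaller remarks. Your exact-quadratic decomposition $F(\mu+s\rho)=F(\mu)+sA_\mu(\rho)-s^2B(\rho)$ with $B\ge 0$ (strict for $\rho\ne 0$) is a nice strengthening of what the paper uses — it upgrades the first-order condition to a strict inequality $A_\mu(\rho)>0$ — but the paper's weak inequality already suffices once the quadratic trick is in hand, so this buys nothing here. Your alternative route (showing $F(\delta_\phi)<F(\mu)$ directly) is a strictly stronger claim than the lemma and is not obviously true: the global minimizer may well be of the form $q\delta_{x}+(1-q)\delta_0$ rather than $\delta_\phi$, and nothing in the lemma requires comparison to $\delta_\phi$ specifically. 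You also acknowledge you cannot close that route; I would not pursue it.
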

\begin{proof}
    Suppose $\mu=p\delta_x+(1-p)\delta_y$ is a minimizer for $F$ over $\cM_{\phi}$ for $0< y<x< 1$ distinct and $0<p<1$. Then any $z\in [0,1]$ can be written as $z=qx+(1-q)y$ for some $q\in\bbR$ (which may be negative). We have 
    \[
        \mu+t\delta_z-tq\delta_x-t(1-q)\delta_y\in \cM_{\phi}
    \]
    for sufficiently small $t\geq 0$ and so
    \[
        \lim_{t\to 0^+} \frac{F\big(\mu+t\delta_z-tq\delta_x-t(1-q)\delta_y\big)-F(\mu)}{t}
        \geq 0.
    \]
    It is not difficult to see from the definition \eqref{eq:F} of $F$ that the left-hand limit equals
    \begin{equation}
        \label{eq:f}
        f(z)-qf(x)-(1-q)f(y)\geq 0\,,
    \end{equation}
    for
    \[
        f(w):=
        2[pH(xw)+(1-p)H(yw)]-H(w).
    \]
    Equation \eqref{eq:f} implies that $f$ lies above the line passing through $(x, f(x))$ and $(y, f(y))$. 
    Since $f$ is a smooth function and $x,y$ are in the interior of $[0,1]$, we deduce that
    \begin{enumerate}[label=(\alph*)]
        \item \label{itm:f'} $f'(x)=f'(y)= \frac{f(x)-f(y)}{x-y}$, and
        \item \label{itm:f''} $f''(x),f''(y)\ge 0$. 
    \end{enumerate}
    Moreover, \ref{itm:f'} implies
    \begin{enumerate}[label=(\alph*), resume]
        \item $f''(z)\le 0$ for some $z\in [y,x]$.
    \end{enumerate}
    However we compute using $H'(w)=\log\frac{1-w}{w}$ that:
    \begin{align*}
        f'(z)
        &=
        2\lt[px\log\frac{1-xz}{xz}+(1-p)y\log\frac{1-yz}{yz}\rt]-\log\frac{1-z}{z}\,,
        \\
        f''(z)
        &=
        -2\lt[\frac{px}{z(1-xz)}+\frac{(1-p)y}{z(1-yz)}\rt]+\frac{1}{z(1-z)}\,.
    \end{align*} 
    Note that $g(z):= z(1-z)(1-xz)(1-yz)f''(z)$ has the same sign as $f''(z)$ and is a quadratic function in $z$ with leading coefficient 
    \[
        -2pxy-2(1-p)xy+xy=-xy
        <0\,.
    \]
    Hence the inequalities $g(x),g(y)\geq 0$ and $g(z)\leq 0$ can hold only if $g$ and hence $f''$ vanishes on the entire interval $[x,y]$. 
    This is impossible since we just saw $g$ has non-zero leading coefficient.

    The case $x=1$, $y>0$ is very similar. 
    While we have $f''(y)\ge 0$ as above, since $1$ is not in the interior of $[0,1]$, we cannot immediately deduce that $f''(1)\ge 0$. 
    However in this case $g(z)$ is a multiple of $1-z$, and so $g(1)=0\geq 0$.
    Then the same argument applies: $g(z)$ is a quadratic polynomial with negative leading coefficient $-y<0$. 
    Because $g$ takes non-negative values at $y$ and $1$, it takes positive values in between. 
    However since $f$ is continuous on $[0,1]$ and smooth on $(0,1)$, and stays above the line segment through $(y,f(y))$ and $(1,f(1))$, it must have non-positive second derivative at some $z\in (y,1)$. 
    Since $g$ and $f''$ have the same sign on $(0,1)$, this is a contradiction. 
    (Note that $f''(1)$ does not actually exist if $x=1$ and is not used in this argument.)
\end{proof}

\section{Conclusion}

\begin{proof}[Proof of Theorem~\ref{thm:main}]
    Follows from Lemmas~\ref{lem:at-most-two} and \ref{lem:not-two}. 
\end{proof}

\begin{lemma}
    \label{lem:phi*-ge-varphi}
    We have that $\phi^\ast \ge \varphi$. 
\end{lemma}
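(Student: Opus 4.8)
The plan is to show the contrapositive statement that no $\phi < \varphi$ lies in $S$; since $S$ is upward-closed, this immediately yields $\phi^\ast = \min(S) \ge \varphi$. The key algebraic observation is that $\varphi = \frac{\sqrt5-1}{2}$ satisfies $\varphi^2 + \varphi - 1 = 0$, so $\varphi^2 = 1-\varphi$. Combined with the symmetry $H(t) = H(1-t)$ of the binary entropy function, this gives $H(\varphi^2) = H(1-\varphi) = H(\varphi)$.

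With this in hand, the argument is essentially a one-line check: suppose toward a contradiction that some $\phi \in S$ satisfies $\phi < \varphi$. Then $x = \varphi$ lies in $[\phi, 1]$, so the defining inequality of $S$ must hold at this $x$, namely $\phi H(\varphi^2) \ge \varphi H(\varphi)$. But the left-hand side equals $\phi H(\varphi)$, and since $H(\varphi) > 0$ (as $\varphi \in (0,1)$) and $\phi < \varphi$, we get $\phi H(\varphi) < \varphi H(\varphi)$, a contradiction. Hence every element of $S$ is at least $\varphi$, and in particular $\phi^\ast \ge \varphi$.

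There is no real obstacle here; the only thing to get right is spotting that $x = \varphi$ is the correct test point, which is forced by the fact that it is precisely where equality holds in Claim~\ref{claim:ineq} (and this in turn is explained by $\varphi^2 = 1-\varphi$). One should also remember to invoke that $S$ is monotone in $\phi$ — already noted in the text right after the definition of $S$ — so that ruling out all $\phi < \varphi$ suffices to bound the minimum. Together with Lemma~\ref{lem:phi*-ge-varphi}, Claim~\ref{claim:ineq} will give the matching upper bound $\phi^\ast \le \varphi$ (since Claim~\ref{claim:ineq} says exactly that $\varphi \in S$), establishing Claim~\ref{claim:numeric}.
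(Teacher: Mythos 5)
Your argument is correct and matches the paper's proof exactly: both test the defining inequality of $S$ at $x = \varphi$ and use the identity $H(\varphi^2) = H(\varphi)$ (which follows from $\varphi^2 = 1-\varphi$) to derive a contradiction when $\phi < \varphi$. You simply spell out the supporting details (the symmetry of $H$, positivity of $H(\varphi)$, and monotonicity of $S$) that the paper leaves implicit.
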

\begin{proof}
    Note that $H(\varphi^2) = H(\varphi)$. 
    If $\phi < \varphi$, then $\phi H(\varphi^2) < \varphi H(\varphi)$, and so $\phi \not\in S$. 
\end{proof}

\begin{corollary}
    \label{cor:main-larger-than-phi*}
    If $\phi \ge \phi^\ast$, then $F(\mu) \ge 0$ for all $\mu \in \cM_\phi$.
\end{corollary}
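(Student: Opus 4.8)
The plan is to apply Theorem~\ref{thm:main} to reduce the minimization of $F$ over $\cM_\phi$ to a one-parameter family of two-point measures, evaluate $F$ on this family explicitly, and then recognize the resulting inequality as the condition defining $S$.

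First I would fix a minimizer $\mu$ of $F$ over $\cM_\phi$ of the form guaranteed by Theorem~\ref{thm:main}: $\mu$ is supported on at most two points, and if on exactly two, one of them is $0$. Since $\phi \ge \phi^\ast \ge \varphi > 0$ by Lemma~\ref{lem:phi*-ge-varphi}, the point mass $\delta_0$ does not lie in $\cM_\phi$, so in every case we may write $\mu = (1-p)\delta_0 + p\delta_x$ for some $x \in (0,1]$ and $p \in (0,1]$ (taking $p=1$, $x=\phi$ when $\mu$ is a single point mass). The mean constraint forces $px = \phi$, hence $p = \phi/x$ and $x \in [\phi,1]$.

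Next I would evaluate $F(\mu)$ directly from \eqref{eq:F}. Since $H(0)=0$, only the event that both coordinates equal $x$ contributes, so $\EE_{(a,b)\sim\mu\times\mu}[H(ab)] = p^2 H(x^2)$, while $\EE_{a\sim\mu}[H(a)] = pH(x)$. Thus
\[
    F(\mu) = p^2 H(x^2) - p H(x) = \frac{\phi}{x^2}\big(\phi H(x^2) - x H(x)\big),
\]
using $p = \phi/x$. As $\phi, x > 0$, we have $F(\mu)\ge 0$ if and only if $\phi H(x^2) \ge x H(x)$.

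Finally I would observe that this last inequality, needed only for our particular $x\in[\phi,1]$, is implied by $\phi\in S$. Since the condition defining $S$ is monotone in $\phi$ and $S$ is closed, $S = [\phi^\ast,1]$; as $\phi^\ast \le \phi \le 1$ we get $\phi\in S$, hence $\phi H(x^2)\ge x H(x)$ for all $x\in[\phi,1]$, and in particular $F(\mu)\ge 0$. There is no genuinely hard step here; the two points requiring care are invoking Lemma~\ref{lem:phi*-ge-varphi} to exclude the degenerate minimizer $\delta_0$ (so that $p=\phi/x$ and the division by $x^2$ are legitimate), and unpacking ``$\phi\ge\phi^\ast=\min S$'' into ``$\phi\in S$'' using that $S$ is a closed, upward-closed set.
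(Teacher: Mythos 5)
Your proof is correct and takes essentially the same route as the paper: reduce via Theorem~\ref{thm:main} to two-point measures $p\delta_x+(1-p)\delta_0$ with $p=\phi/x$, compute $F(\mu)=\frac{\phi}{x^2}(\phi H(x^2)-xH(x))$, and invoke monotonicity of the condition defining $S$ to conclude $\phi\in S$. The extra care you take in invoking Lemma~\ref{lem:phi*-ge-varphi} to rule out $\delta_0$ and in unpacking $S=[\phi^\ast,1]$ makes the argument a bit more explicit but does not change its substance.
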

\begin{proof}
    By Theorem~\ref{thm:main}, it suffices to check $F(\mu) \ge 0$ for $\mu = p\delta_x + (1-p)\delta_0$ with $p = \phi/x$ and $x\in [\phi,1]$ (this includes the case $\mu = \delta_\phi$, corresponding to $x=\phi$).
    By monotonicity of the condition defining $S$, $\phi \in S$.
    So, 
    \[
        F(\mu) = \fr{\phi^2}{x^2}H(x^2) - \fr{\phi}{x} H(x) = \fr{\phi}{x^2} (\phi H(x^2) - xH(x)) \ge 0\,.
    \qedhere
    \]
\end{proof}

From Theorem~\ref{thm:main}, we deduce the following tight version of Gilmer's \cite[Lemma 1]{gilmer2022constant}.
Theorem~\ref{thm:union-closed} follows from Corollary~\ref{cor:gilmer-tight} by the same argument as in \cite[Proof of Theorem 1]{gilmer2022constant}.
We recall Gilmer's ingenious insight was that given a union-closed family $\cF \subseteq 2^{[n]}$, if $A,A'$ are independent uniformly random samples from $\cF$, then $A\cup A'\in\cF$ is \emph{not} uniformly random and thus has strictly smaller entropy. On the other hand, Corollary~\ref{cor:gilmer-tight} can be applied element-by-element to show that $A\cup A'$ actually has equal or larger entropy.

\begin{corollary}
    \label{cor:gilmer-tight}
    Suppose $\{p_c\}_{c\in \cS} \subset [0,1]$ is a finite sequence of real numbers and $c$ is a random variable supported on $\cS$ such that $\EE_c[p_c] \le 1-\phi^\ast$.
    If $c'$ is an independent copy of $c$, then
    \[
        \EE_{c,c'}[H(p_c+p_{c'}-p_cp_{c'})] \ge \EE_c [H(p_c)]\,.
    \]
\end{corollary}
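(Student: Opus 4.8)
The plan is to deduce this directly from Corollary~\ref{cor:main-larger-than-phi*} via the change of variables $q_c = 1-p_c$. First I would record the algebraic identity $p_c + p_{c'} - p_c p_{c'} = 1-(1-p_c)(1-p_{c'}) = 1 - q_c q_{c'}$, together with the symmetry $H(1-t) = H(t)$ of the binary entropy. These give $H(p_c + p_{c'} - p_c p_{c'}) = H(q_c q_{c'})$ and $H(p_c) = H(q_c)$, so the desired inequality is equivalent to $\EE_{c,c'}[H(q_c q_{c'})] \ge \EE_c[H(q_c)]$. Similarly, the hypothesis $\EE_c[p_c] \le 1-\phi^\ast$ becomes $\EE_c[q_c] \ge \phi^\ast$.

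Next I would introduce the probability measure $\mu \in \cP([0,1])$ equal to the law of the random variable $q_c$; since $\cS$ is finite, $\mu$ is finitely supported, and hence certainly in $\cP([0,1])$. Setting $\phi := \EE_c[q_c] = \int_0^1 x\,\mu(\de x) \in [0,1]$, we have $\mu \in \cM_\phi$. By the reformulation above and the definition \eqref{eq:F} of $F$, it holds that $F(\mu) = \EE_{(x,y)\sim \mu\times\mu}[H(xy)] - \EE_{x\sim\mu}[H(x)] = \EE_{c,c'}[H(q_c q_{c'})] - \EE_c[H(q_c)]$, so the goal is exactly $F(\mu) \ge 0$.

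Finally, since $\phi = \EE_c[q_c] \ge \phi^\ast$, Corollary~\ref{cor:main-larger-than-phi*} applies and gives $F(\mu) \ge 0$, which completes the proof. There is essentially no obstacle here beyond the translation; the one point worth noting is that $\phi$ need not equal $\phi^\ast$ but only satisfy $\phi \ge \phi^\ast$, which is precisely the regime covered by Corollary~\ref{cor:main-larger-than-phi*} (and ultimately relies on the monotonicity in $\phi$ of the condition defining $S$). The boundary cases where some $p_c \in \{0,1\}$ require no special treatment, since $H$ is continuous on $[0,1]$ and $F$ is defined on all of $\cM_\phi$.
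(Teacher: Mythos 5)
Your proof is correct and follows essentially the same route as the paper: define $\mu$ as the law of $q_c = 1-p_c$, use $H(1-t)=H(t)$ to rewrite both sides in terms of $F(\mu)$, and invoke Corollary~\ref{cor:main-larger-than-phi*} with $\phi = \EE[q_c] \ge \phi^\ast$. The only difference is that your writeup spells out the algebraic identities more explicitly than the paper does.
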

\begin{proof}
   Let $\mu$ be the distribution of $x=1-p_c$.
   Let $\phi = \EE_{x\sim \mu} [x]$, so $\phi > \phi^\ast$. 
   By Corollary~\ref{cor:main-larger-than-phi*},
   \[
        \EE_{c,c'}[H(p_c+p_{c'}-p_cp_{c'})] - \EE_c [H(p_c)]
        =
        \EE_{(x,y)\sim \mu\times \mu}H(xy) - \EE_{x\sim \mu} H(x)
        = F(\mu)
        \ge 0\,.
    \qedhere
   \]
\end{proof}

Finally, we verify Claim~\ref{claim:numeric} assuming Claim~\ref{claim:ineq}.
\begin{proof}[Proof of Claim~\ref{claim:numeric}]
    Claim~\ref{claim:ineq} implies $\varphi \in S$, so $\phi^\ast \le \varphi$ by definition of $\phi^\ast$. 
    On the other hand, Lemma~\ref{lem:phi*-ge-varphi} gives $\phi^\ast \ge \varphi$.
\end{proof}

\bibliographystyle{acm}
\bibliography{bib}

\appendix

\section{Proof of Claim~\ref{claim:ineq}}

In this appendix, we prove Claim~\ref{claim:ineq}.
Throughout this appendix, we use Claims to indicate results requiring the correctness of computer outputs within margin of error $10^{-3}$ or greater. The only computations which rely on a computer are the entries in Tables~\ref{tab:Lbash} and \ref{tab:Gbash}.
Figure~\ref{fig:plot} plots the function
\[
    G(x) = \varphi H(x^2) - xH(x),
\]
from which Claim~\ref{claim:ineq} can be checked visually.
\begin{figure}
    \centering
    \includegraphics[width=0.5\textwidth]{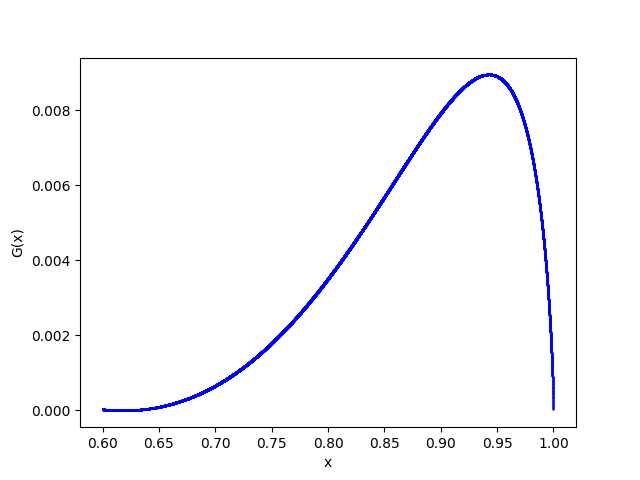}
    \caption{Plot of $G(x)$ for $x\in [0.6,1]$. Claim~\ref{claim:ineq} states the minimum value of $0$ on $x\in [\varphi,1]$ is achieved precisely at the endpoints $x\in \{\varphi,1\}$.}
    \label{fig:plot}
\end{figure}
We show below that, assuming correctness of certain computer calculations to within margin of error $10^{-3}$,
\[
    G(x)\geq 0,\quad \forall x\in [\varphi,1].
\]
We verify this separately on the intervals $I_1=[\varphi,0.77],I_2=[0.76,0.98],I_3=[0.98,1]$.

\subsection{Verification on $I_1$}

We first compute the derivative of $G$:
\begin{align*}
    G'(x)=&2x \varphi \log \fr{1-x^2}{x^2} - H(x) - x \log \fr{1-x}{x} \\
    &= 2x \varphi \log \fr{1-x^2}{x^2} + x \log x + (1-x) \log (1-x) + x \log x - x \log (1-x) \\
    &= 2x \varphi \log \fr{1-x^2}{x^2} + 2x \log x + (1-2x) \log (1-x) \\
\end{align*}
Note that $G(\varphi)=G'(\varphi)=0$, the latter since
\begin{align*}
    G'(\varphi)
    &=
    2\varphi^2 \log(1/\varphi)+2\varphi \log(\varphi)+(1-2\varphi)\log(\varphi^2)
    \\
    &=
    (-2\varphi^2 +2\varphi+2(1-2\varphi))\log\varphi
    \\
    &=
    2(1-\varphi-\varphi^2)\log(\varphi)=0.
\end{align*}
\begin{claim}
\label{claim:I1}
    Claim~\ref{claim:ineq} holds on $I_1=[\varphi,0.77]$. 
\end{claim}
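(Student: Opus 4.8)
\textit{Proof plan.}
The plan is to show that $G$ is convex on $I_1 = [\varphi, 0.77]$ and then exploit the boundary data $G(\varphi) = G'(\varphi) = 0$, already recorded above, to conclude $G \ge 0$ there. Indeed, $G$ is smooth on $(0,1)$, so for each $x \in (\varphi, 0.77]$ Taylor's theorem with Lagrange remainder gives $G(x) = \tfrac12 G''(\xi)\,(x-\varphi)^2$ for some $\xi \in (\varphi, x)$; since $\xi$ then lies in $(\varphi, 0.77)$, it suffices to prove
\[
    G''(t) > 0 \qquad \text{for all } t \in [\varphi, 0.77],
\]
and this moreover yields strict inequality $G(x) > 0$ on $(\varphi, 0.77]$, matching the equality characterization in Claim~\ref{claim:ineq} on this interval (note $1 \notin I_1$).

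To carry this out I would first differentiate the formula for $G'$ displayed above; the product-rule term $2x\varphi \cdot \tfrac{\rmd}{\rmd x}\log\tfrac{1-x^2}{x^2}$ contracts with the factor $2x\varphi$ to give $-\tfrac{4\varphi}{1-x^2}$, and one obtains
\[
    G''(x) = 2\varphi \log\fr{1-x^2}{x^2} - \fr{4\varphi}{1-x^2} + 2\log\fr{x}{1-x} + 1 + \fr{x}{1-x}.
\]
Differentiating once more,
\[
    G'''(x) = -\fr{4\varphi}{x(1-x^2)} - \fr{8\varphi x}{(1-x^2)^2} + \fr{2}{x(1-x)} + \fr{1}{(1-x)^2}.
\]
On $I_1$ every denominator here is bounded away from $0$ (since $x \ge \varphi > 0.6$ and $1-x \ge 0.23$), so $|G'''| \le M$ for an explicit constant, e.g.\ $M = 100$. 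I would then verify $G'' > 0$ by a finite computer-assisted check: evaluate $G''$ on a grid $\varphi = t_0 < t_1 < \dots < t_n = 0.77$ with mesh $t_{j+1}-t_j \le \delta$, and confirm that each numerically computed value exceeds $M\delta + 10^{-3}$, the additive $10^{-3}$ covering the stated evaluation error. The mean value theorem then gives $G''(t) \ge G''(t_j) - M\delta > 0$ for $t \in [t_j, t_{j+1}]$. Numerically $G''$ stays above roughly $0.17$ on $I_1$, so the check has ample slack; this is exactly what the accompanying table certifies.

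The one point worth flagging is that there is no analytic obstacle here — the reason for passing to the \emph{second} derivative is that both $G$ and $G'$ vanish at $\varphi$, with $G(x)$ of order $(x-\varphi)^2$ near $\varphi$, so a direct sampling bound on $G$ or $G'$ would be defeated near the left endpoint of $I_1$, where the function is smaller than any fixed numerical tolerance. Since $G''$ is bounded away from $0$ on $I_1$, sampling it is robust, and the remaining steps (the explicit $|G'''|$ bound and the grid evaluation) are routine bookkeeping. If one wished to avoid computer assistance on this interval, the natural attempt would be to lower-bound $G''$ by grouping $-\tfrac{4\varphi}{1-x^2} + \tfrac{x}{1-x} = \tfrac{x^2+x-4\varphi}{1-x^2}$ and bounding the logarithmic terms crudely, but this pairing loses slightly too much near $x = 0.77$, so the grid argument is the cleaner route.
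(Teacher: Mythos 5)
Your proposal is correct and follows essentially the same strategy as the paper: both establish $G(\varphi)=G'(\varphi)=0$ and then show $G$ is convex on $I_1$ by combining a finite grid of numerical evaluations with an explicit bound on how fast the second-derivative quantity can vary between grid points. The only cosmetic difference is that the paper clears the denominator, works with $L(x)=(1-x^2)G''(x)$, and bounds the Lipschitz constant of $L$ (getting $\le 15.5$, so a mesh of $1/200$ suffices with $L\ge 0.04$), whereas you bound $|G'''|$ directly (getting a constant near $100$), which would simply force a somewhat finer grid.
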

\begin{proof}
    As $G(\varphi)=G'(\varphi)=0$, it suffices to verify that $G$ is convex on $I_1$.
    It is not hard to check that its second derivative equals $G''(x) = L(x)/(1-x^2)$, where
    \[
        L(x)
        :=
        2\varphi(1-x^2)\log(x^{-2}-1)-4\varphi-2x^2 \log x + 2(x^2-1)\log(1-x)+x+2\log(x)+1\,.
    \]
    We now estimate the Lipschitz constant of each non-constant term of $L$ on $x\in I_1$. For the first term,
    \begin{equation}
    \label{eq:L-first-term}
    \begin{aligned}
        \lt|\frac{d}{dx}
        \big(2\varphi(1-x^2)\log(x^{-2}-1)\big)\rt|
        &\leq
        2\varphi\sup_{x\in I_1}
        \big(
        |2x^3|+2|x\log(x^{-2}-1)|
        \big)
        \\
        &\leq
        2\varphi(1.1+1.6\cdot \log(2))
        \\
        &\leq
        2\varphi\cdot 2.3
        \leq 
        3
    \end{aligned}
    \end{equation}
    since $\log(2)\leq 0.75$ and $\varphi\leq 5/8$. Next,
    \begin{align*}
        \lt|\frac{d}{dx}
        \big(2x^2\log(x)\big)\rt|
        &\leq
        \sup_{x\in I_1}|4x\log(x)+2x|
        \\
        &\leq
        1.6\sup_{x\in I_1}|2\log(x)+1|
        \\
        &\leq
        1.6
    \end{align*}
    since $\log(x)\in [-1,0]$ for all $x\in 
    I_1$. Continuing, using $\log(5)\leq 2$,
    \begin{align*}
        \lt|\frac{d}{dx}
        \big(2(x^2-1)\log(1-x)\big)\rt|
        &\leq
        2\sup_{x\in I_1}|2x\log(1-x)-\frac{x^2-1}{1-x}|
        \\
        &\leq
        2\sup_{x\in I_1}|2x\log(1-x)+x+1|
        \\
        &\leq
        2\cdot\max(1.6\log(5),1.8)
        \\
        &\leq
        2\cdot 1.6\cdot 2
        =
        6.4.
    \end{align*}
    Finally $\frac{d}{dx}(x)=1$ and $\frac{d}{dx}(2\log x)=2/x\leq 3.5$. 
    Combining, we find that $L(x)$ restricted to $I_1$ has Lipschitz constant at most
    \[
        1.6+6.4+1+3+3.5\leq 15.5.
    \]
    Therefore to show $G$ is convex and hence non-negative on $I_1=[\varphi,0.77]$ it suffices to exhibit a $\frac{1}{400}$-dense subset of $I_1$ on which $L(x)=(1-x^2)G''(x)\geq 0.04\geq \frac{15.5}{400}$. In Table~\ref{tab:Lbash} below we compute the values of $L$ on each multiple of $\frac{1}{200}$ from $0.6$ to $0.77$ inclusive. We find that $L(x)\geq 0.09$ holds at all of these points, completing the numerical verification on $I_1$.
\end{proof}
\begin{table}[h!]
    \centering
    \footnotesize
    \begin{tabular}{|c|c||c|c||c|c||c|c||c|c||c|c|} 
        \hline
        $x$ & $L(x)$ & $x$ & $L(x)$ & $x$ & $L(x)$ & $x$ & $L(x)$ & $x$ & $L(x)$ & $x$ & $L(x)$ \\ \hline \hline
        0.600 & 0.1020 & 0.630 & 0.1117 & 0.660 & 0.1173 & 0.690 & 0.1182 & 0.720 & 0.1137 & 0.750 & 0.1032 \\ \hline
        0.605 & 0.1039 & 0.635 & 0.1130 & 0.665 & 0.1178 & 0.695 & 0.1178 & 0.725 & 0.1124 & 0.755 & 0.1009 \\ \hline
        0.610 & 0.1057 & 0.640 & 0.1141 & 0.670 & 0.1182 & 0.700 & 0.1173 & 0.730 & 0.1109 & 0.760 & 0.0983 \\ \hline
        0.615 & 0.1074 & 0.645 & 0.1151 & 0.675 & 0.1184 & 0.705 & 0.1167 & 0.735 & 0.1093 & 0.765 & 0.0955 \\ \hline
        0.620 & 0.1089 & 0.650 & 0.1159 & 0.680 & 0.1185 & 0.710 & 0.1159 & 0.740 & 0.1075 & 0.770 & 0.0925 \\ \hline
        0.625 & 0.1104 & 0.655 & 0.1167 & 0.685 & 0.1184 & 0.715 & 0.1149 & 0.745 & 0.1054 &        &        \\ \hline
    \end{tabular}
    \caption{Evaluations of $L$ to precision $10^{-4}$. All values appear to be at least $0.09$, and it suffices for all values to be at least $0.04$.}
    \label{tab:Lbash}
\end{table}

\subsection{Verification on $I_2$}

Our verification for $x\in I_2$ is based on evaluating $G$. 
We write $G(x)=g_1(x)-g_2(x)$ for
 \begin{align*}
    g_1(x) &= \varphi H(x^2),
    \\
    g_2(x) &= xH(x).
\end{align*}
Note that $g_1$ is clearly decreasing on $I_2$. The next lemma shows the same for $g_2$.
\begin{lemma}
    \label{lem:f2-decr}
    $g_2$ is decreasing on $[5/7,1]\supseteq I_2$.
\end{lemma}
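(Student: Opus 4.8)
The plan is to show that $g_2'(x) \le 0$ for all $x \in [5/7,1)$ (with the derivative blowing up to $-\infty$ at $x=1$, which is harmless). We compute
\[
    g_2'(x) = H(x) + xH'(x) = H(x) + x\log\frac{1-x}{x}
    = -x\log x - (1-x)\log(1-x) + x\log(1-x) - x\log x,
\]
so that
\[
    g_2'(x) = -2x\log x + (1-2x)\log(1-x).
\]
The goal is thus to prove $-2x\log x + (1-2x)\log(1-x) \le 0$ on $[5/7,1)$, i.e. $(2x-1)\log(1-x) \le 2x\log x$, equivalently (since $\log(1-x) < 0$ and $\log x < 0$, both sides negative) $\frac{2x-1}{2x} \ge \frac{\log x}{\log(1-x)}$ after dividing by the negative quantity $\log(1-x)$... actually it is cleaner to just bound $g_2'$ directly.

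First I would note that on $[5/7,1)$ we have $2x - 1 \ge 3/7 > 0$, so $(1-2x)\log(1-x) = (2x-1)\cdot\bigl(-\log(1-x)\bigr) > 0$ — this term has the ``wrong'' sign, so the inequality is genuinely a balance between the two terms. The natural move is to use the elementary bound $-\log(1-x) \le \frac{x}{1-x}$ (equivalently $\log(1-x) \ge -\frac{x}{1-x}$, which follows from $\log t \ge 1 - 1/t$), giving
\[
    g_2'(x) \le -2x\log x + (2x-1)\cdot\frac{x}{1-x}
    = \frac{x}{1-x}\Bigl[-2(1-x)\log x + (2x-1)\Bigr].
\]
Since $\frac{x}{1-x} > 0$ on the interval, it suffices to show $\psi(x) := 2(1-x)\log(1/x) \ge 2x-1$ for $x\in[5/7,1)$. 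At $x=1$ both sides equal $1$; so I would show $\psi(x) - (2x-1) \ge 0$ by checking its behavior near $1$ and its monotonicity/convexity. One clean route: write $x = 1-u$ with $u\in(0,2/7]$, expand $\log(1/x) = -\log(1-u) = u + u^2/2 + u^3/3 + \cdots \ge u + u^2/2$, so $\psi(x) \ge 2u(u+u^2/2) = 2u^2 + u^3$, while $2x-1 = 1-2u$. Then it remains to check $2u^2 + u^3 \ge 1 - 2u$ on $u\in(0,2/7]$, i.e. $u^3 + 2u^2 + 2u - 1 \ge 0$; but this cubic is increasing in $u$ and at $u = 2/7$ its value is negative, so this particular crude bound is \emph{not} enough near the left endpoint $x=5/7$.

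The honest resolution is therefore to be less wasteful: instead of bounding $-\log(1-x)$ once, split the interval, or use the tighter two-term Padé-type bound $-\log(1-x)\le \frac{x(2-x)}{2(1-x)}$ near $x=1$ while handling $[5/7, x_0]$ for some intermediate $x_0$ by a direct convexity or Lipschitz argument on $g_2'$ in the style of the $I_1$ verification (bounding $g_2''$ and checking $g_2' < 0$ on a fine net). \textbf{The main obstacle} is precisely this: the inequality $g_2' \le 0$ is \emph{tight at $x=1$} (both terms vanish there at first order in a way that nearly cancels), so any single global convexity-type estimate loses too much, and one must either use a sufficiently sharp logarithm bound or localize. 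I expect the cleanest writeup uses the substitution $x = 1-u$ and proves $h(u) := -2(1-u)\log(1-u) + (2u-1)\log u \le 0$ — wait, let me restate in the original variable — proves that $q(x) := -2x\log x + (1-2x)\log(1-x)$ is itself convex on $[5/7,1)$ (its second derivative is $-2\log x - 2 + \frac{1-2x}{-(1-x)} - \cdots$, an elementary rational-plus-log expression one can sign on the interval), and then uses $q(1^-) = 0$ together with $q'(1^-) = +\infty$... no: convexity plus $q(1)=0$ would force $q\le 0$ to the left only if $q$ is also decreasing. So the final plan: show $q$ is concave on $[5/7,1)$ (check $q''\le 0$ there by an elementary estimate), note $q(1^-)=0$, and note $q'(x) = -2\log x - 2 + \frac{2x-1}{1-x} - 2 \to +\infty$ as $x\to 1^-$; a concave function approaching $0$ at the right endpoint with derivative $\to+\infty$ there is negative just left of $1$, and concavity then propagates negativity leftward provided we also verify $q(5/7)\le 0$ directly (a single numerical check, $q(5/7) = -\tfrac{10}{7}\log\tfrac57 - \tfrac37\log\tfrac27 \approx -0.48 - (-0.537) < 0$). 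Thus the lemma reduces to (i) the elementary sign of $q''$ on $[5/7,1)$ and (ii) the two boundary evaluations, which is routine.
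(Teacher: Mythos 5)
Your computation of $g_2'$ contains a sign error that propagates through the rest of the argument. From
\[
    g_2'(x) = -x\log x - (1-x)\log(1-x) + x\log(1-x) - x\log x,
\]
the $\log(1-x)$ terms combine to $\bigl(-(1-x)+x\bigr)\log(1-x) = (2x-1)\log(1-x)$, so the correct formula is
\[
    g_2'(x) = -2x\log x + (2x-1)\log(1-x) = 2x\log\tfrac1x - (2x-1)\log\tfrac{1}{1-x},
\]
not $-2x\log x + (1-2x)\log(1-x)$. This changes the qualitative behaviour near $x=1$: since $H$ has a vertical tangent at $1$, one has $g_2'(x)\to -\infty$ as $x\to 1^-$, so your assertion ``$q(1^-)=0$'' is false (and even with your sign convention the claimed limit would be $+\infty$, not $0$). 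The final plan is then doubly broken: a concave function cannot have one-sided derivative tending to $+\infty$ at its right endpoint (a concave function has a nonincreasing derivative), so ``$q$ concave on $[5/7,1)$ and $q'(1^-)=+\infty$'' is self-contradictory; and, independently, concavity together with $q(5/7)\le 0$ and $q(1^-)\le 0$ does \emph{not} force $q\le 0$ on the interval --- that conclusion would require \emph{convexity}, since a concave function lies \emph{above} its chords, not below. Your initial diagnosis (that the crude bound $-\log(1-x)\le \tfrac{x}{1-x}$ is too lossy near $5/7$) was fine, but the escape route does not close.

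The paper avoids any second-derivative analysis. Starting from $g_2'(x) = 2x\log\tfrac1x - (2x-1)\log\tfrac{1}{1-x}$, the condition $g_2'(x)\le 0$ is equivalent, after dividing by $2x>0$, to
\[
    \Bigl(1-\tfrac{1}{2x}\Bigr)\log\tfrac{1}{1-x} \;\ge\; \log\tfrac1x .
\]
On $[5/7,1)$ the left-hand side is a product of two positive increasing factors (hence increasing) and the right-hand side is positive and decreasing, so the inequality, once true, stays true; it therefore suffices to check it at $x=5/7$, which reduces to the elementary arithmetic fact $(7/5)^{10/3}\le 7/2$. If you want to salvage your own route you would first need the corrected sign, and then you would find $q=g_2'$ is genuinely concave with $q(1^-)=-\infty$, but concavity alone still does not give $q\le 0$ from endpoint information, so you would need an additional monotonicity or explicit root-location argument --- at which point the paper's one-line monotone reformulation is simply cleaner.
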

\begin{proof}
    First we claim that it suffices to show $g_2'(5/7)\leq 0$. This is because
    \begin{align*}
    g_2'(x)=H(x)+x\log\frac{1-x}{x}
    =
    2x\log\frac{1}{x}-(2x-1)\log\frac{1}{1-x}
    \end{align*}
    so $g_2'(x)\leq 0$ if and only if 
    \begin{equation}
    \label{eq:f2'-condition}
    \lt(1-\frac{1}{2x}\rt) \log\frac{1}{1-x}\geq \log\frac{1}{x}.
    \end{equation}
    Indeed both terms on the left-hand side are increasing while the right-hand side is decreasing.

    It remains to show that $g_2'(5/7)\leq 0$ which in light of \eqref{eq:f2'-condition} is equivalent to showing
    \[
    \frac{3}{10} \log(7/2)\geq \log(7/5),
    \]
    i.e. $(7/5)^{10/3}\leq 7/2$. This holds since $(7/5)^3 \leq 2(7/5)=14/5$ and $7/5\leq \lt(\frac{5}{4}\rt)^3 =\lt(\frac{7/2}{14/5}\rt)^3$.
\end{proof}

\begin{claim}
\label{claim:I2}
    Claim~\ref{claim:ineq} holds for $x\in I_2$. 
\end{claim}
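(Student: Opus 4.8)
\emph{Proof proposal for Claim~\ref{claim:I2}.}

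The plan is to verify $G(x)\ge 0$ on $I_2=[0.76,0.98]$ by a grid argument that leverages the monotonicity of \emph{both} pieces of $G=g_1-g_2$, so that finitely many function evaluations pin down $G$ on the whole interval with no Lipschitz bookkeeping needed.

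First I would record that $g_1$ is decreasing on $I_2$: for $x\in I_2$ one has $x^2\in[1/2,1]$, where $H$ is decreasing, so $g_1(x)=\varphi H(x^2)$ is decreasing (equivalently $g_1'(x)=2\varphi x\log\tfrac{1-x^2}{x^2}<0$ there, since $1-x^2<x^2$). Combined with Lemma~\ref{lem:f2-decr}, which gives that $g_2$ is decreasing on $[5/7,1]\supseteq I_2$, this yields the key inequality: for any $[a,b]\subseteq I_2$,
\[
    \inf_{x\in[a,b]}G(x)\;\ge\;g_1(b)-g_2(a),
\]
because $g_1(x)\ge g_1(b)$ and $g_2(x)\le g_2(a)$ for all $x\in[a,b]$. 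Then I would fix a grid $0.76=a_0<a_1<\cdots<a_M=0.98$ and verify, using the computer evaluations in Table~\ref{tab:Gbash}, that $g_1(a_{j+1})-g_2(a_j)$ is positive with a margin exceeding the (interval-arithmetic) error of each evaluated quantity. Since $I_2=\bigcup_j[a_j,a_{j+1}]$, the displayed inequality then gives $G>0$ throughout $I_2$; as $\varphi,1\notin I_2$, this also confirms that equality in Claim~\ref{claim:ineq} does not occur on $I_2$.

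The delicate point is calibrating the grid. The bound $g_1(b)-g_2(a)$ can never exceed $\min(G(a),G(b))$ (it equals $G(a)$ minus the decrease of $g_1$ across $[a,b]$, and also $G(b)$ minus the decrease of $g_2$), and $G$ is genuinely small near the ends of $I_2$: one computes $G(0.76)\approx 2\cdot10^{-3}$ and $G(0.98)\approx 7\cdot10^{-3}$, while near $x=0.98$ both $|g_1'|$ and $|g_2'|$ are roughly $4$, so the bound degrades by about $4|b-a|$ per cell there. Hence a coarse uniform grid (for instance the spacing $1/200$ used for Table~\ref{tab:Lbash}) does not suffice: the grid must be refined to spacing of order $10^{-3}$ near $x=0.76$ and near $x=0.98$, while a coarser spacing is adequate in the bulk, where $G\gtrsim 3\cdot10^{-3}$ and the derivatives are moderate. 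One can lighten the left-endpoint burden by invoking Claim~\ref{claim:I1}, which already proves the inequality on $[\varphi,0.77]\supseteq[0.76,0.77]$, and running the grid argument only on $[0.77,0.98]$; the refinement near $x=0.98$ remains necessary regardless. I expect this calibration near the endpoints — reconciling the steepness of $g_1,g_2$ with the small value of $G$ and the $10^{-3}$ error budget — to be the main obstacle.
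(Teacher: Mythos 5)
Your proposal is correct and matches the paper's argument essentially verbatim: both note that $g_1$ and $g_2$ are each decreasing on $I_2$ (the latter via Lemma~\ref{lem:f2-decr}), deduce $\inf_{x\in[x_i,x_{i+1}]}G(x)\ge g_1(x_{i+1})-g_2(x_i)$, and discharge the inequality by the grid evaluations in Table~\ref{tab:Gbash}. Your remarks on calibrating the grid near the endpoints are consistent with the non-uniform spacing the paper actually uses, but add nothing to the logical structure.
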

\begin{proof}
    We computer-evaluate $g_1,g_2$ at a finite set of values $x_1<x_2<\dots<x_{97}$ with $5/7<x_1<0.76$ and $x_{97}=0.98$ and verify that $g_1(x_{i+1})\geq g_2(x_i)$ for each $i$. 
    The values are shown in Table~\ref{tab:Gbash}; note that in all cases $g_1(x_{i+1})- g_2(x_i)\ge \frac{2}{1000}$ holds, modulo rounding to four decimal places.
    The intervals $[x_i,x_{i+1}]$ cover $I_2$, and for all $x\in [x_i,x_{i+1}]$ we have
    \[
        g_2(x) \le g_2(x_i) \le g_1(x_{i+1}) \le g_1(x)\,.
    \qedhere
    \]
\end{proof}

\begin{table}[h!]
    \centering \footnotesize
    \begin{tabular}{|c|c|c||c|c|c||c|c|c||c|c|c|}
        \hline
        $x$ & $g_1(x)$ & $g_2(x)$ & $x$ & $g_1(x)$ & $g_2(x)$ & $x$ & $g_1(x)$ & $g_2(x)$ & $x$ & $g_1(x)$ & $g_2(x)$ \\
        \hline \hline
        0.7598 & 0.4210 & 0.4189 & 0.7797 & 0.4139 & 0.4111 & 0.8472 & 0.3678 & 0.3622 & 0.9350 & 0.2338 & 0.2249 \\ \hline
        0.7600 & 0.4209 & 0.4188 & 0.7814 & 0.4131 & 0.4103 & 0.8507 & 0.3643 & 0.3586 & 0.9380 & 0.2270 & 0.2180 \\ \hline
        0.7603 & 0.4208 & 0.4187 & 0.7832 & 0.4124 & 0.4095 & 0.8543 & 0.3606 & 0.3547 & 0.9409 & 0.2202 & 0.2112 \\ \hline
        0.7606 & 0.4207 & 0.4186 & 0.7851 & 0.4115 & 0.4085 & 0.8579 & 0.3567 & 0.3507 & 0.9437 & 0.2134 & 0.2045 \\ \hline
        0.7609 & 0.4206 & 0.4185 & 0.7871 & 0.4106 & 0.4075 & 0.8615 & 0.3528 & 0.3465 & 0.9465 & 0.2065 & 0.1975 \\ \hline
        0.7613 & 0.4205 & 0.4184 & 0.7892 & 0.4095 & 0.4064 & 0.8651 & 0.3486 & 0.3422 & 0.9492 & 0.1996 & 0.1907 \\ \hline
        0.7617 & 0.4204 & 0.4183 & 0.7913 & 0.4085 & 0.4053 & 0.8688 & 0.3442 & 0.3377 & 0.9518 & 0.1927 & 0.1839 \\ \hline
        0.7621 & 0.4203 & 0.4181 & 0.7935 & 0.4074 & 0.4041 & 0.8725 & 0.3397 & 0.3330 & 0.9543 & 0.1860 & 0.1772 \\ \hline
        0.7626 & 0.4201 & 0.4180 & 0.7958 & 0.4062 & 0.4028 & 0.8762 & 0.3350 & 0.3281 & 0.9567 & 0.1793 & 0.1706 \\ \hline
        0.7631 & 0.4200 & 0.4178 & 0.7982 & 0.4048 & 0.4014 & 0.8799 & 0.3301 & 0.3230 & 0.9590 & 0.1728 & 0.1641 \\ \hline
        0.7637 & 0.4198 & 0.4176 & 0.8007 & 0.4034 & 0.3999 & 0.8836 & 0.3251 & 0.3178 & 0.9612 & 0.1663 & 0.1577 \\ \hline
        0.7643 & 0.4196 & 0.4174 & 0.8033 & 0.4019 & 0.3983 & 0.8873 & 0.3198 & 0.3124 & 0.9633 & 0.1600 & 0.1515 \\ \hline
        0.7650 & 0.4194 & 0.4171 & 0.8060 & 0.4003 & 0.3965 & 0.8909 & 0.3146 & 0.3070 & 0.9654 & 0.1535 & 0.1452 \\ \hline
        0.7657 & 0.4191 & 0.4169 & 0.8088 & 0.3985 & 0.3947 & 0.8945 & 0.3092 & 0.3014 & 0.9674 & 0.1472 & 0.1390 \\ \hline
        0.7665 & 0.4189 & 0.4166 & 0.8116 & 0.3967 & 0.3927 & 0.8981 & 0.3035 & 0.2957 & 0.9693 & 0.1411 & 0.1330 \\ \hline
        0.7673 & 0.4186 & 0.4163 & 0.8145 & 0.3948 & 0.3907 & 0.9017 & 0.2977 & 0.2897 & 0.9711 & 0.1351 & 0.1271 \\ \hline
        0.7682 & 0.4183 & 0.4159 & 0.8175 & 0.3927 & 0.3884 & 0.9052 & 0.2919 & 0.2838 & 0.9728 & 0.1293 & 0.1215 \\ \hline
        0.7692 & 0.4179 & 0.4156 & 0.8206 & 0.3904 & 0.3861 & 0.9087 & 0.2859 & 0.2776 & 0.9744 & 0.1237 & 0.1160 \\ \hline
        0.7702 & 0.4176 & 0.4152 & 0.8237 & 0.3881 & 0.3836 & 0.9122 & 0.2797 & 0.2713 & 0.9759 & 0.1183 & 0.1109 \\ \hline
        0.7713 & 0.4172 & 0.4147 & 0.8269 & 0.3857 & 0.3810 & 0.9156 & 0.2734 & 0.2650 & 0.9773 & 0.1132 & 0.1059 \\ \hline
        0.7725 & 0.4167 & 0.4142 & 0.8301 & 0.3831 & 0.3783 & 0.9190 & 0.2670 & 0.2584 & 0.9787 & 0.1080 & 0.1009 \\ \hline
        0.7738 & 0.4163 & 0.4137 & 0.8334 & 0.3803 & 0.3754 & 0.9223 & 0.2606 & 0.2519 & 0.9800 & 0.1030 & 0.0961 \\ \hline
        0.7752 & 0.4157 & 0.4131 & 0.8368 & 0.3774 & 0.3723 & 0.9256 & 0.2539 & 0.2452 &        &        &        \\ \hline
        0.7766 & 0.4152 & 0.4125 & 0.8402 & 0.3744 & 0.3691 & 0.9288 & 0.2473 & 0.2385 &        &        &        \\ \hline
        0.7781 & 0.4145 & 0.4119 & 0.8437 & 0.3711 & 0.3657 & 0.9319 & 0.2406 & 0.2318 &        &        &        \\ \hline
    \end{tabular}
    \caption{Evaluations of $g_1$ and $g_2$ to precision $10^{-4}$. We require that for consecutive inputs $x_i<x_{i+1}$ in the table, $g_1(x_{i+1})- g_2(x_i)\ge 0$. The values shown in fact satisfy $g_1(x_{i+1})- g_2(x_i)\geq\frac{2}{1000}$ modulo rounding.}
    \label{tab:Gbash}
\end{table}

\subsection{Verification on $I_3$}

\begin{proposition}
\label{prop:I3}
    Claim~\ref{claim:ineq} holds for $x \in I_3$.
\end{proposition}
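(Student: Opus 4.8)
The plan is to reduce $G$ near $x=1$ to a combination of elementary terms that is manifestly almost positive, and then estimate it termwise; the dominant contributions turn out to be the $u\log u$ terms with $u=1-x$, and the argument carries comfortable slack. First I would expand $G$ using $H(t)=-t\log t-(1-t)\log(1-t)$ together with $1-x^2=(1-x)(1+x)$ and $\log(1-x^2)=\log(1-x)+\log(1+x)$, obtaining
\[
G(x) = (1-2\varphi)\,x^2\log x + (1-x)\log(1-x)\,\bigl[x-\varphi(1+x)\bigr] - \varphi(1-x)(1+x)\log(1+x).
\]
Then, using the golden-ratio identities $\varphi^2=1-\varphi$ and $\varphi^3=2\varphi-1$ (so that $1-2\varphi=-\varphi^3$ and $x-\varphi(1+x)=-\varphi^3-\varphi^2(1-x)$) and writing $u=1-x$, this rearranges to
\[
G(x) = \varphi^3 x^2(-\log x) + \varphi^3\,u(-\log u) + \varphi^2 u^2(-\log u) - \varphi\,u(2-u)\log(2-u).
\]

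On $x\in[0.98,1)$, equivalently $u\in(0,1/50]$, the first three summands are strictly positive (since $\log x<0$ and $\log u<0$) while the last is negative (since $\log(2-u)>0$). Discarding the nonnegative first and third summands and bounding $-\log u\ge\log 50$ (as $u\le 1/50$) and $(2-u)\log(2-u)\le 2\log 2$ (since $t\mapsto t\log t$ is increasing on $[1,\infty)$ and $1<2-u\le 2$), I obtain
\[
G(x)\ \ge\ \varphi\,u\,\bigl(\varphi^2\log 50-2\log 2\bigr).
\]
It then suffices to check the scalar inequality $\varphi^2\log 50>2\log 2$. Using $\varphi^2=1-\varphi>0.38$ (from $\sqrt5<2.24$), together with $\log 50>3.9$ and $\log 4<1.4$ — both elementary bounds on the exponential — one has $\varphi^2\log 50>0.38\cdot 3.9=1.482>1.4>\log 4=2\log 2$. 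Hence $G(x)>0$ for all $x\in[0.98,1)$, while $G(1)=0$ directly (every term in the last display vanishes at $u=0$), which establishes Claim~\ref{claim:ineq} on $I_3$ together with the stated equality characterization.

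The only genuine work is the algebraic bookkeeping leading to the second displayed form of $G$; once that is in hand, every remaining estimate is a one-line bound and, in contrast to the verifications on $I_1$ and $I_2$, no computer assistance is required on $I_3$. The single point demanding a little care is the scalar inequality $\varphi^2\log 50>2\log 2$, whose slack is only about $0.1$, so I would record explicit rational bounds for $\log 50$, $\log 2$, and $\varphi^2$ to make the argument fully rigorous.
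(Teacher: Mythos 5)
Your proof is correct, and it reaches the goal by a route that is genuinely different from the paper's. The paper's proof of Proposition~\ref{prop:I3} sets $\eps=1-x$, invokes the two-sided Taylor bounds $\eps(\log\frac1\eps+1-\eps)\le H(\eps)\le\eps(\log\frac1\eps+1)$ to approximate $g_1(x)=\varphi H(x^2)$ from below and $g_2(x)=xH(x)$ from above, divides by $\eps$, and bounds both sides. You instead work with an \emph{exact} algebraic decomposition
\[
G(x)=\varphi^3 x^2(-\log x)+\varphi^3 u(-\log u)+\varphi^2 u^2(-\log u)-\varphi\,u(2-u)\log(2-u),\qquad u=1-x,
\]
obtained purely from the golden-ratio identities $\varphi^2=1-\varphi$, $\varphi^3=2\varphi-1$, discard the two manifestly nonnegative summands, and bound the remaining two. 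What's notable is that both routes converge to the \emph{same} scalar inequality: your $\varphi^2\log 50>2\log 2$ is equivalent (using $\varphi^2=(3-\sqrt5)/2$) to $\log_2 50>3+\sqrt5$, which is precisely the inequality the paper checks in the form $(\sqrt5-2)\log 50\ge(\sqrt5-1)\log2$. The exact decomposition is arguably the cleaner path: it dispenses with the Taylor bounds, makes the sign structure of $G$ near $x=1$ transparent so that the ``drop the positive terms'' step is immediate, and yields the equality case at $x=1$ by inspection. As you note, the only remaining care is to record rigorous rational bounds certifying $\varphi^2>0.38$, $\log 50>3.9$, and $\log 4<1.4$; all three are easy, so the argument on $I_3$ is fully elementary and needs no computer assistance, just as in the paper.
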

\begin{proof}
    Taylor expansion of $\log(1-\eps)$ gives that for all $\eps \in (0,1)$, 
    \[
        \eps \lt(\log \fr{1}{\eps} + 1 - \eps\rt)
        \le 
        H(\eps) 
        \le 
        \eps \lt(\log \fr{1}{\eps} + 1\rt)\,.
    \]
    Let $x = 1-\eps$ for $\eps \in [0,0.02]$. Then
    \begin{align*}
        g_1(x) &= \varphi H(2\eps - \eps^2) 
        \ge \varphi \eps (2-\eps) \big(\log \fr{1}{\eps} - \log(2-\eps) + (1-\eps)^2\big)\,, \\
        g_2(x) &= (1-\eps) H(\eps) \le \eps (1-\eps) \big(\log \fr{1}{\eps} + 1\big)\,.
    \end{align*}
    Dividing by $\eps$, it suffices to prove
    \[
        \big((2\varphi-1) + (1-\varphi) \eps\big) \log \fr{1}{\eps}
        \ge 
        (1-\eps) \lt(1 - \varphi(1-\eps)(2-\eps)\rt) + \varphi(2-\eps)\log(2-\eps).
    \]
    Noting $\varphi(1-\eps)(2-\eps)\ge 1$ in the first line below, we next find
    \begin{align*}
        &(1-\eps) \lt(1 - \varphi(1-\eps)(2-\eps)\rt) + \varphi(2-\eps)\log(2-\eps)
        \le 
        2\varphi \log 2
        = 
        (\sqrt5-1)\log 2,
    \\
    &\lt((2\varphi-1) + (1-\varphi) \eps\rt) \log \fr{1}{\eps}
        \ge 
        (2\varphi-1) \log \fr{1}{\eps} 
        \ge 
        (\sqrt5-2) \log 50.
    \end{align*}
    Finally $(\sqrt5-2) \log 50 \ge (\sqrt5-1)\log 2$ because
    \begin{align*}
    \log_2(50)
    &\geq
    \log_2(2^5\cdot 1.5)
    \geq
    5.5
    \\
    &\geq
    3+\sqrt{5}
    =(\sqrt 5-1)/(\sqrt 5-2).
    \end{align*}
    Hence the proof is complete. 
    Equality holds if and only if $\eps=0$, i.e. $x=1$.
\end{proof}

\begin{proof}[Proof of Claim~\ref{claim:ineq}]
    Follows by combining Claims~\ref{claim:I1}, \ref{claim:I2} and Proposition~\ref{prop:I3}.
\end{proof}

\end{document}